\newtheorem{thm}{Theorem}
\newcommand{\p}{\partial}
\renewcommand{\a}{\alpha}
\renewcommand{\b}{\beta}
\renewcommand{\hat}{\widehat}
\def\beq#1#2\eeq{%
        \begin{equation}%
        \label{#1}%
            #2%
        \end{equation}%
    }
\begin{document}

\title[$\vee$-systems, holonomy Lie algebras and logarithmic vector fields]{$\vee$-systems, holonomy Lie algebras and logarithmic vector fields}

\author{M.V. Feigin}\address{
School of Mathematics and Statistics, 
University of Glasgow, 
15 University Gardens, 
Glasgow G12 8QW, 
UK}
\email{Misha.Feigin@glasgow.ac.uk}

\author{A.P. Veselov}
\address{Department of Mathematical Sciences,
Loughborough University, Loughborough LE11 3TU, UK  and Moscow State University, Moscow 119899, Russia}
\email{A.P.Veselov@lboro.ac.uk}

\maketitle

\begin{abstract}
It is shown that the description of certain class of representations of the holonomy Lie algebra $\mathfrak  g_{\Delta}$ 
associated to hyperplane arrangement $\Delta$ is essentially equivalent to the classification of $\vee$-systems associated to $\Delta.$
The flat sections of the corresponding $\vee$-connection can be interpreted as vector fields, which are both logarithmic and gradient.
We conjecture that the hyperplane arrangement of any $\vee$-system is free in Saito's sense and show this for all known $\vee$-systems and for a special class of $\vee$-systems called harmonic, which includes all Coxeter systems. In the irreducible Coxeter case the potentials of the corresponding gradient vector fields turn out to be Saito flat coordinates, or their one-parameter deformations. We give formulas for these deformations as well as for the potentials of the classical families of harmonic $\vee$-systems. 
\end{abstract}

\bigskip

\section{Introduction}

The theory of the hyperplane arrangements has a rich history and is known to be related to many areas of mathematics: combinatorics, topology, singularity theory and classical analysis, see the Introduction in \cite{OT}.

It was the link with the singularity theory, which was the motivation to study the logarithmic vector fields for K. Saito, who introduced an important notion of a free arrangement \cite{Saito}. However, in spite of substantial work done since then in this area (mainly by Terao and his school, see e.g. recent paper \cite{A}) there is not much known about how large the class of free arrangements is. As Orlik and Terao wrote in \cite{OT}: ``It is an enduring mystery of the subject just what makes an arrangement free,'' which is still valid now. In particular, there is still not much evidence in favour of Terao's conjecture that freeness is a combinatorial property and depends only on the intersection lattice \cite{OT}. 

One of our main messages is that this problem may be closely related to the classification of the logarithmic Frobenius structures and $\vee$-systems, which is also a largely open problem, see the most recent discussion in \cite{SchV}. 
The $\vee$-systems are special finite covector configurations introduced in \cite{V1,V2} in relation with certain class of solutions of the generalized Witten-Dijkgraaf-Verlinde-Verlinde (WDVV) equations, playing a fundamental role in 2D topological  field theory, $N=2$ SUSY Yang-Mills theory and the theory of Frobenius manifolds introduced by Dubrovin \cite{DVV, D1, MMM}. 
They can be defined as follows.

Let $V$ be a complex vector space and  $\mathcal{A}\subset V^*$ be a
finite set of pairwise non-collinear vectors in the dual space $V^*$ (covectors) spanning $V^*$. To such a set one can
associate the following {\it canonical form} $G_{\mathcal A}$ on
$V$:
\begin{equation}
\label{GEC} 
G_{\mathcal A}(x,y)=\sum_{\a\in\mathcal{A}}\a(x)\a(y),
\end{equation}
where $x,y\in V$. Let us assume that this form is non-degenerate and thus
establishes the isomorphism
$$
\varphi_{\mathcal A}: V \rightarrow V^*.
$$
Let $\a^\vee = \varphi_{\mathcal A}^{-1}(\alpha)$ be the corresponding inverse image of $\alpha \in \mathcal A.$
Note that because of the choice of the canonical form $\alpha^\vee$ is a complicated function of all $\alpha \in \mathcal A$.

The system $\mathcal{A}$ is called $\vee$-{\it system} if the following $\vee$-{\it conditions}
\begin{equation}
\label{vee}
\sum\limits_{\beta \in \Pi \cap \mathcal {A}}
\beta(\alpha^\vee)\beta^\vee=\nu \alpha^{\vee}
\end{equation}
are satisfied for any $\alpha \in \mathcal{A}$ and any two-dimensional plane $\Pi \subset V^*$ containing $\alpha$ and some $\nu$, which may depend on $\Pi$ and $\alpha.$ If $\Pi$ contains more than 2 covectors then (\ref{vee}) imply that $\nu$ does not depend on $\alpha \in \Pi$ 
and 
\begin{equation}
\label{vee1} 
\sum\limits_{\beta \in \Pi \cap \mathcal {A}}
\beta^\vee \otimes \beta |_{\Pi} = \nu(\Pi) Id.
\end{equation}
If $\Pi$ contains only two covectors from $\mathcal A$, say $\alpha$ and $\beta,$ then (\ref{vee}) imply that 
\begin{equation}
\label{vee2} 
G_{\mathcal A}(\alpha^{\vee}, \beta ^{\vee})=0.
\end{equation}

The examples of $\vee$-systems include all two-dimensional systems, 
Coxeter systems and the so-called deformed root systems \cite{MG, SV,V1}, 
but the full classification is still an open problem (see the latest results in \cite{FV, FV2, LST, SchV}).
The combinatorial (or matroidal) structure of all known $\vee$-systems is quite special, 
but there are no general results known so far.

In this paper we aim to link this problem with Saito's theory of logarithmic vector fields and free arrangements \cite{Saito} and with Kohno's theory of holonomy Lie algebras and logarithmic connections \cite{Kohno2}.
 

For any finite set of non-collinear covectors $\mathcal A \subset V^*$ one can consider the {\it associated arrangement} 
of complex hyperplanes $\Delta=\Delta_\mathcal A:=\cup_{\alpha\in \mathcal A}H_\alpha$ in $V$ given by $\alpha(x)=0, \, \alpha \in \mathcal A$ and the corresponding {\it holonomy Lie algebra} $\mathfrak g_{\Delta}$ with generators $\{t_\alpha\}_{\alpha\in\mathcal A}$ and the relations
\begin{equation}
\label{t_A}
[t_\alpha, \sum_{\beta\in \mathcal A\cap \Pi}t_\beta]=0\ ,\quad \alpha\in \mathcal A\cap \Pi,
\end{equation}
where $\Pi$ is any two dimensional subspace of $V^*$ (see Kohno \cite{Kohno, Kohno2}).  
This Lie algebra coincides with the Lie algebra of the unipotent completion of the
fundamental group of the corresponding complement $\Sigma=V\setminus \Delta$  \cite{Kohno}. Its enveloping algebra is the quadratic dual of the
cohomology algebra $H^*(\Sigma, \mathbb C)$ in the cases when the
latter is quadratic \cite{Yuzvinsky}. The relations (\ref{t_A}) are equivalent to the flatness of the universal logarithmic connection \cite{Kohno2}
\begin{equation}\label{KZK}
\nabla_\xi = \partial_\xi - \kappa \sum_{\alpha \in \mathcal A} \frac{\alpha(\xi)}{\alpha(x)}t_{\alpha}, \,\,\, \xi \in V, x \in \Sigma.
\end{equation}

In particular, for the standard arrangement of hyperplanes $H_{ij}$ in $\mathbb C^n$ given by $z_i-z_j=0, \, 1\leq i<j\leq n$ we have the Kohno-Drinfeld Lie algebra $\mathfrak t_n$ with generators $t_{ij}=t_{ji}, \, 1\leq i<j\leq n$ and relations \cite{Kohno}
\begin{equation}\label{t_KD}
[t_{ij}, t_{kl}]=0, \quad [t_{ij}, t_{ik}+t_{jk}]=0
\end{equation}
for all distinct $i,j,k,l$.

The first result of this paper is a one-to-one correspondence between certain linear representations of holonomy Lie algebras and $\vee$-systems (see Theorem 1 below). It is essentially a reformulation of the known equivalence of the  $\vee$-conditions and the flatness of the corresponding $\vee$-{\it connection} \cite{V2}
\begin{equation}\label{veecon}
\nabla^\vee_\xi=\partial_\xi - \kappa\sum_{\alpha\in {\mathcal A}}\frac{\alpha(\xi)}{\alpha(x)}\alpha^\vee\otimes \alpha, 
\end{equation}
where $\xi \in V, \, x \in \Sigma$ and $\kappa \in \mathbb C$ is a parameter.
Similar result was also pointed out recently by Arsie and Lorenzoni in \cite{Arsie}. 

By identifying $T_x\Sigma$ with $V$ we can view the flat sections of the $\vee$-connection 
\begin{equation}\label{KZ0}
\nabla^\vee_\xi \psi =0, \quad \psi, \xi \in V, \, x \in \Sigma
\end{equation}
as the vector fields on $\Sigma$, which are parallel with respect to $\nabla^\vee_\xi$ ($\vee$-parallel vector fields).
The monodromy of the system (\ref{KZ0}) gives a linear representation of the corresponding fundamental group $\pi_1(\Sigma)$ in $V.$

Important examples of $\vee$-systems are the following classical series found in \cite{CV}:
\begin{equation}\label{an}
A_{n}(c)=\left\{\sqrt{c_{i}c_{j}}(e_{i}-e_{j}),0\leq i<j\leq n\right\},
\end{equation}
\begin{equation}
\label{bn}
B_{n}(c)=\left\{\sqrt{c_i c_j} (e_i\pm e_j),\, 1\le i <j \le n; \quad
\sqrt{2c_i(c_i+c_0)}e_i, \, 1\le i \le n\right\}
\end{equation}
respectively with non-zero parameters $c_0,\dots, c_n$ with non-zero sum.

 In the $A_{n}(c)$ case the corresponding system (\ref{KZ0}) is equivalent to the classical Jordan-Pochhammer system with the solutions, which can be given by the Pochhammer type integrals (see \cite{Ao, OT1} and Section 3 below).
The monodromy of this system is closely related to the classical Gassner representation of the pure braid group \cite{Birman} (see the precise statement and the relation with bending of polygons in \cite{KM}).
For a review of the higher rank representations of the braid group in relation with KZ equation we refer to Kohno \cite{Kohno3, Kohno4}.

In the main part of the paper we study the polynomial solutions of the systems (\ref{KZ0}), which are polynomial $\vee$-parallel vector fields, in relation with the theory of logarithmic vector fields and free arrangements \cite{Saito}.  
Such solutions may exist only for special values of parameter $\kappa,$ which can be shown to be equal to the degree of the corresponding solution.

We call $\vee$-system $\mathcal A$ {\it harmonic} if there are $n=rank \, \mathcal A$ linearly independent polynomial $\vee$-parallel vector fields of degrees $\kappa_1, \dots, \kappa_n$ such that
$$\kappa_1+\dots +\kappa_n=|\mathcal A|$$
is the number of covectors in $\mathcal A$.
We show that for any harmonic $\vee$-system the corresponding vector fields are gradient and freely generate all logarithmic vector fields $Der(\log \Delta)$ as a module over polynomial algebra, which means that the corresponding arrangements are free in Saito's sense \cite{OT}. 
As a corollary by Terao's factorisation theorem \cite{OT} the Poincar\'e polynomial, which is the generating function $P_{\Sigma}(t)=\sum b_i t^i$ of Betti numbers of $\Sigma$, in that case has the form
$$
P_{\Sigma}(t)=\prod_{i=1}^n(1+\kappa_i t).
$$

We conjecture that all the arrangements of $\vee$-systems are free, so the corresponding Poincar\'e polynomials are always factorizable in such a form with integer $\kappa_i$.
We prove this for all known $\vee$-systems \cite{FV, FV2}, by showing that in dimension $n>2$ the corresponding arrangements are equivalent to Coxeter arrangements or their restrictions, which are known to be free \cite{OT2}.

In Section 4 we prove that the classical series of $\vee$-systems (\ref{an}), (\ref{bn}) are harmonic and present the residue formulae for the potentials of the corresponding gradient vector fields (see Theorems 4 and 5). This fact seems to be remarkable since as we show even the restrictions of Coxeter systems in general may not be harmonic.

In the last section we discuss the Coxeter case and the relation of harmonic $\vee$-systems with Saito flat coordinates on the orbit space of Coxeter groups \cite{Saito2, Saito3}. 
We prove that all Coxeter $\vee$-systems are harmonic and find the corresponding potentials. In the case when all the roots are normalised to have the same length these potentials  are known to be precisely the Saito flat coordinates \cite{FS}, so in the non-simply laced cases we have one-parameter deformations of these coordinates, which we describe explicitly.

\section{$\vee$-systems and representations of holonomy Lie algebras}

Let $\Delta$ be a central hyperplane arrangement in $V.$ For any hyperplane $H \in \Delta$ we choose $\alpha_H \in V^*$ such that
$H=\{x \in V: \alpha_H(x)=0\}.$
We will call the corresponding set $$\mathcal A =\{\alpha_H, \, H\in \Delta\}\subset V^*$$ 
an {\it equipment} of $\Delta.$ We will assume that the set $\mathcal A$ generates $V^*.$ Arrangement $\Delta$ is called {\it irreducible} if one cannot decompose $V^*= V_1 \oplus V_2$ such that ${\mathcal A} = ({\mathcal A} \cap V_1) \cup ({\mathcal A}\cap V_2)$.   

Assume now that $V$ is a complex Euclidean space with symmetric non-degenerate bilinear form $G.$ Denote by $\hat \alpha=G^{-1} \alpha$ the vector corresponding to $\alpha \in V^*$ and  look for representations $\rho: \mathfrak g_{\Delta} \rightarrow End(V)$ of holonomy Lie algebra $\mathfrak g_{\Delta}$ of the form
\begin{equation}\label{rep}
\rho(t_\alpha)= \hat \alpha \otimes \alpha, \quad \alpha \in \mathcal A
\end{equation}
for some equipment $\mathcal A$ of $\Delta.$ In general, there are no such equipments, so these representations exist only for special hyperplane arrangements.

To state the theorem we will need the following notion of complex Euclidean $\vee$-system introduced in \cite{FV2}.

Let $\mathcal A$ be a finite set of non-collinear vectors in a complex Euclidean vector space $V\cong V^*.$ We say that the set $\mathcal A$ is {\it well-distributed} in $V$ if the canonical
form (\ref{GEC}) is proportional to the Euclidean form $G.$ The set $\mathcal A$ is called {\it complex Euclidean $\vee$-system}
if it is well-distributed in $V$ and any its two-dimensional subsystem is either reducible (consists of two orthogonal vectors) or well-distributed in the corresponding plane. 

Note that we allow here the canonical form to be zero. If the canonical form (\ref{GEC}) is non-degenerate, then we can use it to define the Euclidean structure on $V$ and we have the definition of the usual $\vee$-system.

\begin{thm}
For any $\vee$-system $\mathcal A$ the formula 
$$
\rho(t_\alpha)= \alpha^\vee \otimes \alpha, \quad \alpha \in \mathcal A
$$
defines a representation of the associated holonomy Lie algebra $\mathfrak g_{\Delta}$. The same is true for complex Euclidean $\vee$-systems and representation (\ref{rep}).

Conversely, if (\ref{rep}) is a representation of the holonomy Lie algebra $\mathfrak g_{\Delta}$ for an irreducible arrangement $\Delta$ with equipment $\mathcal A$  then 
$\mathcal A$ is a complex Euclidean $\vee$-system.
\end{thm}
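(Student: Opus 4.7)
The overall strategy is to unpack the holonomy relation $[t_\alpha, \sum_{\beta\in\Pi\cap\mathcal A} t_\beta]=0$ into explicit identities in $\mathrm{End}(V)$ via the rank-one substitution $\rho(t_\alpha)=\alpha^\vee\otimes\alpha$ (respectively $\hat\alpha\otimes\alpha$) and then to match these with the $\vee$-conditions. A direct computation gives
$$[\alpha^\vee \otimes \alpha,\, \beta^\vee \otimes \beta] = \alpha(\beta^\vee)\bigl(\alpha^\vee \otimes \beta - \beta^\vee \otimes \alpha\bigr),$$
using that $\alpha(\beta^\vee)=G_{\mathcal A}(\alpha^\vee,\beta^\vee)=\beta(\alpha^\vee)$ is symmetric in $\alpha,\beta$. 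For the forward direction, the $\vee$-condition (\ref{vee}) asserts $\sum_{\beta\in\Pi\cap\mathcal A}\beta(\alpha^\vee)\beta^\vee=\nu\alpha^\vee$, and applying $\varphi_{\mathcal A}$ yields the dual identity $\sum\beta(\alpha^\vee)\beta=\nu\alpha$. Plugging both into the summed commutator collapses it to $\nu(\alpha^\vee\otimes\alpha)-\nu(\alpha^\vee\otimes\alpha)=0$, so $\rho$ respects the relations (\ref{t_A}).

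For the converse, I run the same computation backwards. Writing $g_{\alpha\beta}=G(\hat\alpha,\hat\beta)$, the vanishing of the summed commutator reads $\hat\alpha\otimes A = B\otimes\alpha$ with $A=\sum_{\beta\in\Pi\cap\mathcal A}g_{\alpha\beta}\beta$ and $B=\sum_{\beta\in\Pi\cap\mathcal A}g_{\alpha\beta}\hat\beta$. Since $\hat\alpha$ and $\alpha$ are nonzero, the identity of rank-one operators forces $B=\nu\hat\alpha$ and $A=\nu\alpha$ for a common scalar $\nu=\nu(\Pi,\alpha)$, possibly zero. When $|\Pi\cap\mathcal A|=2$ with second covector $\beta$, reading the coefficient of $\hat\beta$ in $B=\nu\hat\alpha$ forces $g_{\alpha\beta}=0$, i.e. the plane is reducible by $G$-orthogonality. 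When $|\Pi\cap\mathcal A|\geq 3$, the identity reads $G_{\mathcal A\cap\Pi}(\hat\alpha,\cdot)=\nu\,G(\hat\alpha,\cdot)$; expressing any third covector of $\Pi\cap\mathcal A$ as a linear combination of two linearly independent ones and matching coefficients of the functionals $G(\hat\alpha_i,\cdot)$ forces the scalars $\nu$ to coincide across $\Pi\cap\mathcal A$, so $\mathcal A\cap\Pi$ is well-distributed in $\hat\Pi$.

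It remains to upgrade this per-plane well-distributedness to the global statement $G_{\mathcal A}=\mu\,G$. Consider the $G$-selfadjoint operator $T=G^{-1}G_{\mathcal A}^\sharp\colon V\to V$. Partitioning $\mathcal A$ by the two-planes through a fixed $\alpha$, the per-plane proportionality shows that each block $\sum_{\gamma\in\Pi\cap\mathcal A}g_{\alpha\gamma}\hat\gamma$ is a multiple of $\hat\alpha$, and summing gives $T\hat\alpha=\mu_\alpha\hat\alpha$; hence every $\hat\alpha$ is a $T$-eigenvector. Because $\{\hat\alpha\}_{\alpha\in\mathcal A}$ spans $V$, the operator $T$ is diagonalisable with $V=\bigoplus_\lambda V_\lambda$; setting $V_\lambda^*=G(V_\lambda)$ gives a direct-sum decomposition $V^*=\bigoplus_\lambda V_\lambda^*$ along which $\mathcal A$ splits as $\bigsqcup_\lambda(\mathcal A\cap V_\lambda^*)$. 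The irreducibility of $\Delta$ then forces a single eigenvalue, so $G_{\mathcal A}=\mu G$ and $\mathcal A$ is a complex Euclidean $\vee$-system.

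The main technical care is in this last step: one must verify that the $T$-eigenspace decomposition is genuinely nontrivial whenever $T$ has more than one eigenvalue, which in turn requires that every nonzero $V_\lambda^*$ actually meet $\mathcal A$. This follows because $\mathcal A$ spans $V^*$ and the $V_\lambda^*$ are complementary, forcing $\mathrm{span}(\mathcal A\cap V_\lambda^*)=V_\lambda^*$. A minor subtlety is tracking the degenerate case $\nu=0$ through the rank-one step, but in both subcases above the conclusions (orthogonality, respectively well-distributedness with proportionality constant zero) are still correctly recovered.
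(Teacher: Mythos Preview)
Your proof is correct and follows essentially the same route as the paper: both expand the rank-one commutators, identify the holonomy relations with the $\vee$-conditions plane by plane, and then pass to the global statement $G_{\mathcal A}=\mu G$ by showing that every $\hat\alpha$ is an eigenvector of $T=\sum_{\beta}\hat\beta\otimes\beta$ and invoking irreducibility. The only difference is presentational: the paper obtains $[\hat\alpha\otimes\alpha,T]=0$ by summing the per-plane commutator identities and then asserts the scalar conclusion rather tersely, whereas you compute $T\hat\alpha=\mu_\alpha\hat\alpha$ directly and spell out the eigenspace-splitting argument behind the irreducibility step.
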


Thus for given hyperplane arrangement $\Delta$ the description of all representations of holonomy Lie algebra $\mathfrak g_{\Delta}$ of the form (\ref{rep}) is essentially equivalent to the classification of all $\vee$-systems $\mathcal A$ associated to $\Delta.$ 
Note that $\rho$ depends not only on the arrangement, but also on the choice of the equations of the hyperplanes.

Recall first that due to Kohno \cite{Kohno2} the flatness conditions of the universal logarithmic connection (\ref{KZK})
$$
 [\nabla_\xi, \nabla_\eta]=0
$$
are equivalent to the relations (\ref{t_A}).

A similar interpretation of the $\vee$-conditions as flatness of the corresponding $\vee$-connection 
on the tangent bundle $T(\Sigma)\approx \Sigma \times V$ 
\begin{equation}\label{KZ}
\nabla^\vee_\xi=\partial_\xi - \kappa\sum_{\alpha\in {\mathcal A}}\frac{\alpha(\xi)}{\alpha(x)}\alpha^\vee\otimes \alpha, \quad \xi \in V, \, x \in \Sigma
\end{equation}
was pointed out in \cite{V2}. Indeed, it is easy to see that the relation $[\nabla^\vee_\xi, \nabla^\vee_\eta]=0$ is equivalent to the identity
$$
\sum_{\alpha,\beta \in {\mathcal A}}\frac{\alpha\wedge \beta}{\alpha(x)\beta(x)}[\alpha^\vee\otimes \alpha, \beta^\vee\otimes \beta]=0,
$$
which in its turn is equivalent to the commutation relations
\begin{equation}\label{pi}
[\alpha^\vee\otimes \alpha, \sum_{\beta \in {\mathcal A \cap \Pi}}\beta^\vee\otimes \beta]=0
\end{equation}
for all $\alpha \in \mathcal A$ and all 2-dimensional subspaces $\Pi \subset V^*$ containing $\alpha.$
Now if $\Pi$ contains only two covectors $\alpha$ and $\beta$ then we have
$$[\alpha^\vee\otimes \alpha, \beta^\vee\otimes \beta]=\alpha(\beta^\vee) \alpha^\vee\otimes\beta-\beta(\alpha^\vee) \beta^\vee\otimes\alpha$$
which is zero for non-proportional $\alpha$ and $\beta$ only if $$\alpha(\beta^\vee)=\beta(\alpha^\vee)=G_{\mathcal A}(\alpha^\vee, \beta^\vee)=0,$$
which is $\vee$-condition (\ref{vee2}).
If $\Pi$ contains more than two covectors then the commutation relations (\ref{pi}) are equivalent to the property that the restriction of the operator
$\sum_{\beta \in {\mathcal A \cap \Pi}}\beta^\vee\otimes \beta$ on $\Pi$ is proportional to the identity, which coincides with $\vee$-condition (\ref{vee1}).

Similarly, for any complex Euclidean $\vee$-system the corresponding connection
$$
\hat\nabla_\xi=\partial_\xi - \kappa\sum_{\alpha\in {\mathcal A}}\frac{\alpha(\xi)}{\alpha(x)}\hat\alpha\otimes \alpha
$$
is flat, which is equivalent to the relations
\begin{equation}\label{pi*}
[\hat \alpha\otimes \alpha, \sum_{\beta \in {\mathcal A \cap \Pi}}\hat \beta\otimes \beta]=0
\end{equation}
for all $\alpha \in \mathcal A$ and all 2-dimensional subspaces $\Pi \subset V^*$ containing $\alpha$ (cf. \cite{V3}).

To prove the first part of the theorem we note that substitution of (\ref{rep}) into the holonomy Lie algebra relations (\ref{t_A}) gives
the relations (\ref{pi*}).

Now fixing $\alpha$ and summing these relations over all 2-dimensional $\Pi$ containing $\alpha$ we have
\begin{equation}\label{A}
[\hat \alpha\otimes \alpha, \sum_{\beta \in {\mathcal A}}\hat \beta\otimes \beta]=0.
\end{equation}
 Since this is true for all $\alpha \in \mathcal A$, the set $\mathcal A$ generates $V^*$ and the arrangement is irreducible this implies that the operator
 $\sum_{\beta \in {\mathcal A}}\hat \beta\otimes \beta$ is proportional to the identity, or equivalently, that
 $$G_{\mathcal A}=\sum_{\beta \in {\mathcal A}}\beta\otimes \beta=\mu G.$$
 If $\mu\neq 0$ then $G_{\mathcal A}$ is non-degenerate and $\alpha^\vee = \mu^{-1} \hat\alpha$ satisfy $\vee$-conditions (\ref{pi}). If $\mu=0$ then we have complex Euclidean $\vee$-system. This completes the proof.
  
 \section{$\vee$-systems and gradient logarithmic vector fields}
 
 One of the main problems in the theory of $\vee$-systems is the characterisation 
 of the corresponding hyperplane arrangements, see e.g. \cite{SchV}.
 Since in dimension 2 any covector system is a $\vee$-system, 
 the problem starts from dimension 3. 
 
We would like to link this problem  with the characterization of free arrangements in the theory of logarithmic vector fields initiated by Kyoji Saito \cite{Saito}. We start with a brief review of this theory, 
 mainly following Orlik and Terao \cite{OT}.
 
Consider a hyperplane arrangement $\Delta \subset \mathbb C^n$. A polynomial vector field $X=\xi_i(z)\frac{\partial}{\partial z_i}$ on $\mathbb C^n$ is called {\it logarithmic}  if it is tangent to every hyperplane $H \in \Delta$.
The hyperplane arrangement $\Delta$ is {\it free} if the space of all logarithmic vector fields $Der(\log \Delta)$ is free as the module over polynomial algebra $P_n=\mathbb C[z_1, \dots, z_n]$ (see \cite{OT,Saito}).
The degrees $b_1, \dots, b_n$ of the corresponding homogeneous generators $X_1,\dots, X_n$ are called the {\it exponents} of the arrangement:
$$\exp \, \Delta=\{b_1, \dots, b_n\}.$$ Here the degree of a homogeneous polynomial vector field $X=\xi_i(z)\frac{\partial}{\partial z_i}$
is defined as the degree of any of its non-zero components: $\deg \, X=\deg\, \xi_i.$

Saito's criterion \cite{OT} says that $\Delta$ is free if and only if there are $n$ homogeneous linearly independent over $P_n$ logarithmic vector fields $X_1,\dots, X_n$ such that the sum of the degrees equals the number of hyperplanes $|\Delta|$:
\begin{equation}\label{saito}
\sum_{i=1}^n \deg \, X_i = |\Delta|.
\end{equation}
Such fields can be chosen as free generators of the module $Der(\log \Delta).$

However, a satisfactory characterization of all free arrangements is still an open problem. There is a conjecture due to Terao, that the freeness property is combinatorial (see \cite{OT}, page 154), but there is still not much evidence in its favour.

Probably the most remarkable result in this area is the following {\it Factorization Theorem} proved by Terao \cite{Terao}:
Poincar\'e polynomial of the complement $\Sigma=\mathbb C^n \setminus \Delta$ for a free arrangement $\Delta$ has the form
\begin{equation}\label{terao}
P_{\Sigma}(t)=\prod_{i=1}^n(1+b_it)
\end{equation}
with $b_1,\dots, b_n$ being the exponents of $\Delta.$
This is a far-going generalisation of Arnold's formula
$$
P_{\Sigma_{n+1}}(t)=(1+t)(1+2t)\dots(1+nt)
$$
for the Poincar\'e polynomial of the configuration space of $n+1$ distinct points on the plane, corresponding to $A_n$-type arrangement, 
see \cite{Arnold}.

It is well-known (Arnold, Saito) that all Coxeter arrangements are free with the exponents $b_i=m_i$ being the exponents of the corresponding Coxeter (finite reflection) group $G$  \cite{Humphreys}.
The corresponding generators $X_i =grad\, f_i, \, i=1,\dots, n,$ where $f_1,\dots, f_n$ are some free generators of the corresponding algebra of polynomial $G$-invariants $\mathbb C[z_1, \dots, z_n]^G,$
which exist by Chevalley theorem. Indeed, it is easy to see that the corresponding fields are logarithmic and, by Saito's criterion, generate $Der(\log \Delta)$ because the sum of the exponents of a Coxeter group is known to be the number of the reflection hyperplanes:
$$
m_1+\dots+m_n=|\Delta|,
$$
see e.g. \cite{Humphreys}.

It is known also that any linear arrangement in $\mathbb C^2$ is free and that
a generic arrangement in $\mathbb C^n$ with $n>2$ is not free \cite{OT}.

The arrangement $\Delta$ is called {\it hereditarily free} if it is free and all restriction arrangements to the hyperplanes of $\Delta$ and their intersections are also free  \cite{OT}. The property of $\Delta$ being free is not hereditary \cite{OT}, but it is known that all Coxeter arrangements 
are hereditarily free \cite{OT2}.
 
{\bf Conjecture.} {\it For any $\vee$-system $\mathcal A$ the associated arrangement $\Delta_{\mathcal A}$ is hereditarily free.}
 
We have shown that the class of $\vee$-systems is closed under the restriction \cite{FV2}, so to prove the conjecture it is enough to show that  
$\Delta_{\mathcal A}$ is free. 
In particular, this would imply by Terao's theorem that the corresponding Poincar\'e polynomial $P_{\Sigma_\mathcal A}(t)$ is factorizable in the form
(\ref{terao}), which would be already a strong topological restriction of the arrangement.


 \begin{thm}
For all known $\vee$-systems \cite{FV, FV2} the conjecture is true.
\end{thm}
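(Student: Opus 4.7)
The plan is to reduce everything to the Coxeter case via the classification in \cite{FV, FV2} and then apply the known hereditary freeness of Coxeter arrangements from \cite{OT2}. Since the authors have already shown that $\vee$-systems are closed under restriction, it suffices to verify that $\Delta_\mathcal{A}$ is free for each item in the list of known $\vee$-systems. Dimension $n=2$ is trivial since every line arrangement in $\mathbb{C}^2$ is free, so the real work begins in dimension $n\geq 3$.

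My first observation is that the arrangement $\Delta_\mathcal{A}$ depends only on the projective classes of the covectors in $\mathcal{A}$, not on their normalizations. Consequently, for the deformed classical families such as $A_n(c)$ and $B_n(c)$ in (\ref{an}), (\ref{bn}), and likewise for the deformations of $C_n$, $D_n$, $F_4$, etc., the arrangement $\Delta_{\mathcal{A}}$ coincides with the corresponding undeformed Coxeter arrangement (reflection hyperplanes of $A_n$, $B_n$ etc.), which is free with exponents equal to those of the Coxeter group. So this entire infinite part of the classification reduces immediately to the Coxeter case, and hereditary freeness follows from Terao's result on Coxeter arrangements \cite{OT2}.

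Next I would handle the finitely many exceptional $\vee$-systems from \cite{FV, FV2} by checking, case by case, that each projectively coincides with (i) a Coxeter arrangement, or (ii) the restriction of a Coxeter arrangement to one of its flats. Since the restriction of a Coxeter arrangement to any intersection of its hyperplanes is itself free (indeed hereditarily free by Orlik--Terao \cite{OT2}), this again yields freeness of $\Delta_\mathcal{A}$. Combining (i), (ii) with closure of the $\vee$-system class under restriction \cite{FV2} produces hereditary freeness, not merely freeness.

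The main obstacle is really bookkeeping rather than a genuine mathematical difficulty: one has to walk through the full list of known $\vee$-systems beyond the classical series, including the exceptional rank-$3$ and rank-$4$ examples produced in \cite{FV, FV2}, and for each one exhibit an explicit identification of $\Delta_\mathcal{A}$ with a Coxeter arrangement or a flat restriction thereof. This requires matching combinatorial data (number of hyperplanes, intersection lattice, restriction exponents) against the tables for restrictions of Coxeter arrangements in \cite{OT2}. Once each exceptional case is pinned down to such a model, the theorem follows from Terao's Factorization Theorem (\ref{terao}) and the hereditary freeness of Coxeter arrangements, with no further computation needed.
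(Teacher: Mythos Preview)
Your overall strategy matches the paper's: reduce each known $\vee$-system arrangement to a Coxeter arrangement or one of its flat restrictions, then invoke \cite{OT2}. The paper does exactly this case-by-case analysis.

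However, there is a genuine gap in your first observation. You assert that for the deformed families $A_n(c)$, $B_n(c)$, $F_4$, etc., the arrangement $\Delta_{\mathcal A}$ coincides with the undeformed Coxeter arrangement because only the projective classes of the covectors matter. This is false as stated: the deformation parameters can cause covectors to \emph{vanish}, not merely rescale, and then the corresponding hyperplanes drop out of $\Delta_{\mathcal A}$. For $B_n(c)$ with $c_0+c_i=0$ for some $i$, the covector $\sqrt{2c_i(c_i+c_0)}\,e_i$ is zero and the hyperplane $x_i=0$ disappears; the resulting arrangement is the Zaslavsky arrangement $D_n^k$, which is \emph{not} a Coxeter arrangement. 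Likewise in the $F_4$ family at $t=0$ one collapses to $D_4$; in $AB_4(k)$ the values $k=1$ and $k=1/3$ kill different blocks of covectors; in $G_3(t)$ and $D_3(t,s)$ similar degenerations occur. Each of these degenerate arrangements must be identified separately with some Coxeter restriction (or handled by Addition--Deletion), and this is where the actual work in the paper's proof lies.

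Your framework (ii) is broad enough to absorb all of these once you notice them --- $D_n^k$ is a restriction of $D_N$, the degenerate $AB_4$ cases match $(E_6,A_1^2)$ or $D_4$, and so on --- but the proposal as written does not acknowledge that the ``classical series'' part of the argument is not a one-liner and requires this degeneration analysis. Without it the proof is incomplete.
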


\begin{proof} In dimension 2 this follows from the fact that in that case any system is a $\vee$-system and any line arrangement is free \cite{OT}. 

For the classical series of $\vee$-systems $A_n(c)$ the corresponding arrangements (\ref{an}) coincide with the Coxeter arrangement of type $A_n,$ and thus are free with the exponents $1,2,\dots, n.$

In the $B_n(c)$ case the corresponding arrangements have Coxeter type $B_n$ unless $c_0+c_i=0$ for some $i$. In that case we have Zaslavsky arrangements $D_n^k$ consisting of hyperplanes $x_i=0, \, i=1, \dots, k, \, 1 \leq k \leq n$ and $x_i\pm x_j=0, \, 1 \leq i < j \leq n.$ These arrangements appear also as some restrictions of the systems of type $D_N$, see \cite{OT, Zaslavsky}, and thus are free with the exponents $1,3,5,\dots, 2n-3, k+n-1$ \cite{OS}.

In the $F_4$ case we have the $\vee$-system consisting of the covectors 
$$e_i\pm e_j, \, 2t e_i, \, t(e_1\pm e_2 \pm e_3 \pm e_4).$$
If $t\neq 0$ this gives the Coxeter arrangement of type $F_4$ with the exponents $1,5,7,11.$
If $t=0$ we have the Coxeter $D_4$ arrangement with the exponents $1,3,3,5.$

For the Coxeter restrictions we will use the notations from \cite{FV} as pairs $(G,H),$ where $G$ is Coxeter group and $H$ is the corresponding parabolic subgroup (see also \cite{OS}).

There are two restrictions of $F_4$, but they are equivalent (see \cite{FV}) to
$$
F_3(t)=\{e_1 \pm e_2, e_2 \pm e_3, e_1 \pm e_3, \sqrt{a}\,e_1, \sqrt{a}\,e_2,
\sqrt{a}\,e_3, t\sqrt{2}\,(e_1\pm e_2 \pm e_3)\}, \,\, a=4t^2+2,
$$
where $t^2 \neq -1/2.$ If $t\neq 0$ we have the arrangement of 13 planes equivalent to the Coxeter restriction $(E_8, D_5)$, which is free with the exponents $1,5,7$ \cite{OS}.  If $t=0$ we have the $B_3$ arrangement with the exponents $1,3,5.$

Consider now the generalised systems related to exceptional basic classical Lie superalgebras and their deformations \cite{SV, FV2}.

In the $AB_4$ case related to Lie superalgebra of type $AB(1,3)$ we have generally 18 covectors 
$$\sqrt{a} e_i, \, i=1,2,3, \,\, \sqrt{b}(e_i\pm e_j), \, 1\leq i < j \leq 3,\,\,  \sqrt{c} e_4, \,\, \frac{1}{2}(e_1\pm e_2 \pm e_3 \pm e_4),$$
where 
$$
a=\frac{3k+1}{2}, \,\, b=\frac{3k-1}{4}, \,\, c=\frac{1-k}{2k},
$$
$k\neq 0, -1/3$ is an arbitrary parameter (see \cite{SV}). When $k=-1/3$ the canonical form is zero, so we have complex Euclidean $\vee$-system but not a $\vee$-system.

If all $a,b,c$ are not zero then we have the free arrangement equivalent to the Coxeter restriction $(E_7, A_3)$ of $E_7$-type system, which is free with the exponents  $1,5,5,7$ (see \cite{OS}).
If $c=0, \, k=1$ we have the hyperplane $x_4=0$ disappeared, so we can apply Terao's Addition-Deletion theorem (see Theorems 4.49 and 4.51 in \cite{OT}) to claim that the corresponding arrangement of 17 hyperplanes is free with the exponents $1,4,5,7.$
Alternatively, one can check that the corresponding $\vee$-system is just the Coxeter restriction $(E_6, A_1\times A_1)$ (see \cite{FV}) and use the results from \cite{OS,OT2}.


Finally, if $b=0$, $k=1/3$, we have the $\vee$-system consisting of 12 covectors
$$
e_i, i=1,2,3,4, \,\, \,\, \frac{1}{2}(e_1\pm e_2 \pm e_3 \pm e_4),
$$
 which is equivalent to Coxeter system of type $D_4$ and thus is free with exponents $1,3,3,5$.
 
 We should consider also the 3D restrictions. There are two types of these restrictions consisting of
the following covectors \cite{FV2}
$$
(AB_4(t),A_1)_1= \{  \sqrt{2(2t^2+1)} e_1, \, 2 \sqrt{2(t^2+1)} e_2, t
\sqrt{\frac{2(2t^2-1)}{t^2+1}}e_3, $$ $$ \sqrt{2}(e_1\pm e_2), \,
t\sqrt{2}(e_1 \pm e_3), \, t(e_1 \pm 2 e_2 \pm e_3)\}
$$
with $t^2\neq -1, -1/2$, and 
$$
(AB_4(t),A_1)_2 = \{ e_1+e_2, e_1+e_3, e_2+e_3, \sqrt{2}e_1,
\sqrt{2}e_2, \sqrt{2}e_3,
\frac{t\sqrt{2}}{\sqrt{t^2+1}}(e_1+e_2+e_3),
$$
$$
\frac1{\sqrt{4t^2+1}}(e_1-e_2), \frac1{\sqrt{4t^2+1}}(e_1-e_3),
\frac1{\sqrt{4t^2+1}}(e_2-e_3)\}
$$
with $t^2\neq -1, -1/2, -1/4.$
If $t^2 \neq 0, 1/2$ then the 3D arrangement $(AB_4(t),A_1)_1$ consists of 11 planes, is equivalent to the Coxeter restriction $(E_7, A_1\times A_3)_2,$ and thus is free with the exponents $1,5,5$ \cite{OS}.
When $t=0$ we have the reducible arrangement of 5 planes $B_2\times A_1$, which is free with exponents $1,1,3$
(see Prop. 4.28 in \cite{OT}). When $t^2=1/2$ we have arrangement of 10 planes equivalent to Coxeter restriction  $(E_6, A_1^3)$ which is free with exponents $1,4,5$ \cite{OS}.
Similarly, if $t\neq 0$ the 3D arrangement $(AB_4(t),A_1)_2$ consists of 10 planes and is equivalent to the Coxeter restriction $(E_6, A_1 \times A_2)$, which is free with the exponents $1,4,5$ \cite{OS}.
When $t=0$ we have 9 planes, forming $B_3$ arrangement with the exponents $1,3,5.$
 
The $\vee$-systems of type $G_3$ related to the Lie superalgebra $G(2,1)$ consist of covectors \cite{SV} 
$$G_3(t)=\{\sqrt{a}e_1, \sqrt{a}e_2, \sqrt{a}(e_1+e_2),
\sqrt{b}(e_1-e_2), \sqrt{b}(2 e_1+e_2), 
\sqrt{b}(e_1+ 2 e_2), \,\, \sqrt{c} e_3,$$
$$
e_1 \pm e_3,\,\,  e_2 \pm e_3, \,\, e_1+e_2 \pm e_3\}, \,\,\, a=2t+1, \, b=\frac{2t-1}{3}, \, c= \frac{3}{t},$$
depending on the parameter $t\ne 0, -1/2.$ In the case $t\neq 1/2$ we have the arrangement of 13 planes equivalent to the Coxeter restriction $(E_7, A_2^2)$ (or, equivalently, $(E_8,A_5)$), which is free with the exponents $1,5,7$ (see \cite{FV, OS}). When $t=1/2$ we have the arrangement of 10 planes, which can be shown to be equivalent to the Coxeter restriction $(E_6, A_1^3)$, and thus is free with the exponents $1,4,5$ (see \cite{OS}).



Finally the $\vee$-systems of type $D_3$ related to the exceptional Lie superalgebra $D(2,1,\lambda)$ 
consist of covectors \cite{FV2}
$$
D_3(t,s)=\{e_1 \pm e_2 \pm e_3, \, \sqrt{2(s+t-1)}e_1, \, \sqrt{\frac{2(s-t+1)}{t}}e_2, \, \sqrt{\frac{2(t-s+1)}{s}}e_3,\}
$$
where $s,t$ are non-zero parameters, such that $s+t+1\neq0.$
For generic values of parameters the corresponding arrangement is equivalent to the Zaslavsky configuration $D_3^1=(D_6, A_3)$ with the exponents $1,3,3.$ If one of the coefficients of $e_1, e_2, e_3$ vanishes, then we have type $A_3$ arrangement with the exponents $1,2,3.$
This analysis together with all other Coxeter restrictions exhausts all the cases and completes the proof.
\end{proof}

{\bf Remark.} {\it Note that for the complex Euclidean $\vee$-systems the conjecture is not true. A counterexample is given by the $\vee$-system of type $F_3(t)$ with $t^2=-\frac{1}{2}$, consisting of the following 10 vectors in $\mathbb C^3$
$$\mathcal A=\{e_1 \pm e_2,\, e_1 \pm e_3,\, e_2 \pm e_3, \, i(e_1\pm e_2 \pm e_3)\}.$$
The corresponding Poincar\'e polynomial
$$P_{\mathcal A}(t)=(1+t)(1+9t+26t^2)$$
is not factorizable, so the arrangement is not free and is not combinatorially equivalent to any Coxeter restriction. Note that the corresponding canonical form $G_{\mathcal A}=0$ in this case.}

We are going to show now that at least for a subclass of $\vee$-systems we can find the corresponding generating logarithmic vector fields $X_1, \dots, X_n$ as polynomial $\vee$-parallel vector fields (\ref{KZ0}) for special values of $\kappa$ being the exponents $b_1,\dots, b_n.$
 
 \begin{thm}\label{gradlog}
The polynomial solutions $\psi$ of (\ref{KZ0}) are gradient logarithmic vector fields for the corresponding arrangement $\Delta_{\mathcal A}$ with the degrees
\begin{equation}\label{deg}
deg \, \psi=\kappa.
\end{equation}
\end{thm}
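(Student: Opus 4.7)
The plan is to extract all three conclusions directly from the flatness equation (\ref{KZ0}), which after moving the connection term to the right-hand side reads
$$
\partial_\xi \psi \;=\; \kappa \sum_{\alpha\in\mathcal A}\frac{\alpha(\xi)\,\alpha(\psi)}{\alpha(x)}\,\alpha^\vee, \qquad \xi\in V,\; x\in\Sigma. \qquad (\ast)
$$
For the logarithmic property I would run a residue analysis at each hyperplane $H_{\alpha_0}$. Since the vectors of $\mathcal A$ are pairwise non-collinear, at a generic point of $H_{\alpha_0}$ (one avoiding all other hyperplanes of $\Delta_{\mathcal A}$) the only summand in $(\ast)$ with a pole is the one for $\alpha=\alpha_0$. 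The left-hand side of $(\ast)$ is polynomial, so this principal part must vanish, which forces $\alpha_0(\psi)$ to vanish on $H_{\alpha_0}$ for every $\xi\in V$. This is precisely the condition that $\psi$ be tangent to $H_{\alpha_0}$, so $\psi\in\mathrm{Der}(\log\Delta_{\mathcal A})$.

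For the gradient property I would take the $G_{\mathcal A}$-inner product of $(\ast)$ with an arbitrary $\eta\in V$ and use $G_{\mathcal A}(\alpha^\vee,\eta)=\alpha(\eta)$ to rewrite it as
$$
G_{\mathcal A}(\partial_\xi\psi,\eta) \;=\; \kappa\sum_{\alpha\in\mathcal A}\frac{\alpha(\xi)\,\alpha(\eta)\,\alpha(\psi)}{\alpha(x)},
$$
whose right-hand side is manifestly symmetric in $\xi$ and $\eta$. Hence the polynomial $1$-form $\omega_\psi:=G_{\mathcal A}(\psi,\cdot)$ is closed on all of $V$, and by the algebraic Poincar\'e lemma it admits a polynomial primitive $F$ with $\omega_\psi=dF$, so $\psi=\nabla F$ in the metric $G_{\mathcal A}$.

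For the degree I would decompose $\psi=\sum_d\psi_d$ into $x$-homogeneous components; since $\nabla^\vee_\xi$ shifts this grading by $-1$, each $\psi_d$ satisfies $(\ast)$ on its own. Contracting $(\ast)$ with $\xi=x$ (legitimate by $\mathbb C$-linearity of $\nabla^\vee_\xi$ in $\xi$) cancels $\alpha(x)$ in the denominator against $\alpha(\xi)=\alpha(x)$ in the numerator, and the sum collapses via the completeness identity $\sum_\alpha\alpha^\vee\otimes\alpha=\mathrm{Id}_V$, which is nothing but $\varphi_{\mathcal A}^{-1}\circ\varphi_{\mathcal A}=\mathrm{Id}$ written out. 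The left-hand side equals $E\psi_d=d\,\psi_d$ by Euler's identity, so the equation becomes $d\,\psi_d=\kappa\,\psi_d$, forcing $d=\kappa$ for every non-zero component. Thus $\psi$ is homogeneous of degree $\kappa$. The only delicate step is the pole analysis, where one must invoke the non-collinearity of $\mathcal A$ to isolate a single polar summand at a generic point of $H_{\alpha_0}$; everything else is pure bookkeeping once the defining symmetry and completeness of the form $G_{\mathcal A}$ are brought into play.
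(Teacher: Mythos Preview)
Your proof is correct and follows essentially the same approach as the paper's. The only difference is presentational: the paper first passes to an orthonormal basis for $G_{\mathcal A}$ so that the system reads $\partial_i\psi_j=\kappa\sum_\alpha\frac{\alpha_i\alpha_j}{(\alpha,z)}(\alpha,\psi)$, and then the symmetry in $i,j$, the regularity forcing $(\alpha,\psi)|_{H_\alpha}=0$, and the Euler contraction $E(\beta,\psi)=\kappa(\beta,\psi)$ are read off directly, whereas you phrase the same three steps invariantly via $G_{\mathcal A}(\partial_\xi\psi,\eta)$, a residue argument, and the completeness relation $\sum_\alpha\alpha^\vee\otimes\alpha=\mathrm{Id}_V$. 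Your homogeneous decomposition of $\psi$ is harmless but unnecessary: the identity $E\psi=\kappa\psi$ already forces homogeneity of degree $\kappa$ without first splitting into graded pieces.
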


\begin{proof}
To prove this it is convenient to choose an orthonormal basis in $V$, so that the canonical form $G_{\mathcal A}$ becomes standard. Then we can identify $V$ and $V^*$ with $\mathbb C^n,$ so that $\alpha^\vee=\alpha$ and
 $$
\sum_{\alpha\in {\mathcal A}}\alpha_i\alpha_j=\delta_{ij}, \quad i, j=1, \dots, n, 
 $$
 where $\alpha_i$ is the $i$-th coordinate of $\alpha$.
The system (\ref{KZ0}) takes the form
 \begin{equation}\label{sections}
\partial_i\psi_j = \kappa\sum_{\alpha\in {\mathcal A}}\frac{\alpha_i \alpha_j}{(\alpha,z)}(\alpha,\psi), \quad  z, \psi(z) \in \mathbb C^n, \,  i, j=1, \dots, n.
\end{equation}

Now from (\ref{sections}) it is immediate that $\partial_i\psi_j=\partial_j\psi_i$, so 
\begin{equation}\label{gr1}
\psi_i=\partial_i F,\,\, \, \quad i=1,\dots, n
\end{equation} 
for some polynomial potential $F(z).$ The fact that $\psi$ is logarithmic follows from the regularity of the left hand side on the hyperplane $(\alpha,z)=0,$ which implies
that $(\alpha,\psi)=0$ on this hyperplane, so that $\psi$ is tangent.
To find the degree of $\psi$ multiply  the relations (\ref{sections}) by $z_i\beta_j$ and add over all $i,j$ to have
$$
E (\beta, \psi) = \kappa \sum_{\alpha\in {\mathcal A}} (\alpha, \beta)(\alpha, \psi) = \kappa (\beta, \psi),
$$
where $E= \sum_{i=1}^n z_i \partial_i$ is the Euler vector field.
\end{proof}

 The potential $F$ of a $\vee$-parallel vector field $\psi$ can be defined in coordinate-free way by the relation
\begin{equation}
\label{pot}
\a (\psi) = G_{\mathcal A}(\a^\vee, \psi) = d F (\a^\vee) = \p_{\a^\vee} F
\end{equation}
for any $\a \in V^*$. The parallel transport condition
\beq{fl1}
\p_\xi \psi = \kappa \sum_{\a \in {\mathcal A}} \frac{\a(\xi) \a(\psi)}{\a(z)} \a^\vee = \kappa \sum_{\a \in {\mathcal A}} \frac{\a(\xi) \p_{\a^\vee}F}{\a(z)} \a^\vee 
\eeq
implies that the potential $F$ satisfies compatible system of the Euler-Poisson-Darboux type equations
\beq{EPD}
\p_\xi\p_\eta F = \kappa \sum_{\a \in {\mathcal A}} \frac{\a(\xi) \a(\eta)}{\a(z)} \p_{\a^\vee}F, \quad \xi,\eta \in V.
\eeq


So the question is for which integer values of parameter $\kappa$ do the polynomial solutions of (\ref{KZ0}) exist, and whether we can find enough such solutions to generate all logarithmic vector fields over polynomial algebra.
Note that for $\kappa=1$ we always have the solution $\psi_i=z_i, \, i=1, \dots, n$ corresponding to the Euler vector field $\psi=E$.

To understand the situation better let us consider the case of rank 2 systems $\mathcal A.$ In this case the gradient generators of logarithmic vector fields may not exist. It is well-known \cite{OT} that any such arrangement is free and
 $Der(\log \Delta_\mathcal A)$ is generated by Euler vector field $E$ and $$X=(\partial_2 Q) \partial_1 - (\partial_1 Q) \partial_2,$$
where $Q=\prod _{\alpha \in \mathcal A}(\alpha, z).$ The last vector field is gradient if and only if $Q$ is harmonic:
$$\Delta Q=0, \quad \Delta= \partial_1^2+\partial_2^2$$ which in general is not the case.
Indeed, consider a particular case of 4 lines with 
$$
Q=x_1x_2(x_1-x_2)(x_1-ax_2).
$$
Then $\Delta Q=2(1+a)(-x_1^2+3x_1x_2-x_2^2)$, which vanishes only when $a=-1$, so the lines form a harmonic set (which means that their cross-ratio is $-1$), projectively equivalent to $B_2$ case. Adding to $X$ a multiple of Euler field $E$ also would not make it gradient. Indeed if $X+f(x)E=grad F$ then
$$\Delta Q=x_2\partial_1 f-x_1\partial_2 f =\partial_\xi f,$$
where $\xi=(x_2,-x_1).$ Since vector field $\xi$ has closed circular orbits, the necessary condition for the existence of polynomial $f$ is
$\int_\gamma \Delta Q dt=0,$ where $\gamma$ is the circle $x_1= \cos t,\, x_2 = \sin t.$ In our case
$$\int_\gamma \Delta Q dt=2(1+a)\int_0^{2\pi} (3\cos t \sin t -1) dt=-4(1+a)\pi,$$ which is zero only if $a=-1.$
One can also check that  in general the corresponding systems (\ref{sections}) do not have polynomial solutions for $\kappa=3$ if $a\neq-1$. 

This means that we are dealing with a special subclass of both free arrangements and $\vee$-systems.
This motivates the following definition.

We say that $\vee$-system $\mathcal A$ of rank $n$ is {\it harmonic} if the corresponding system (\ref{KZ0}) has $n$ linearly independent (at generic point) polynomial solutions for $\kappa=\kappa_1, \dots, \kappa_n$ such that
\begin{equation}\label{harmon}
\kappa_1+\dots+\kappa_n=|\mathcal A|,
\end{equation} 
where $|\mathcal A|$ is the number of covectors in $\mathcal A$.

\begin{thm}
The arrangement $\Delta$ of any harmonic $\vee$-system is free with exponents $b_i=\kappa_i, \, i=1,\dots, n$ and the Poincar\'e polynomial of $ \Sigma = V \setminus \Delta$ has the form
\begin{equation}\label{poinc}
P_{\Sigma}(t)=\prod_{i=1}^n(1+\kappa_i t).
\end{equation}
\end{thm}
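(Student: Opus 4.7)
The plan is to assemble the already-established ingredients: Theorem~\ref{gradlog} together with Saito's criterion and Terao's factorization theorem. First, by Theorem~\ref{gradlog}, each of the $n$ polynomial solutions $\psi_i$ of (\ref{KZ0}) at $\kappa=\kappa_i$ furnished by the harmonic hypothesis is automatically a gradient logarithmic vector field on $\Delta=\Delta_{\mathcal A}$, homogeneous of degree $\kappa_i$. We thus obtain $n$ homogeneous elements $\psi_1,\dots,\psi_n\in\mathrm{Der}(\log\Delta)$ with $\deg\psi_i=\kappa_i$.

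Next I would promote the assumed linear independence of $\psi_1,\dots,\psi_n$ at a generic point to linear independence over the polynomial ring $P_n=\mathbb{C}[z_1,\dots,z_n]$. Arranging the components of the $\psi_i$ as the columns of an $n\times n$ matrix $\Psi(z)$, the generic-point hypothesis says that $\det\Psi$ is a non-zero polynomial; equivalently, the $\psi_i$ are linearly independent over the fraction field $\mathbb{C}(z_1,\dots,z_n)$, and therefore also over $P_n$. Combining this with the harmonic identity
\[
\sum_{i=1}^n\deg\psi_i=\sum_{i=1}^n\kappa_i=|\mathcal A|=|\Delta|,
\]
Saito's criterion (\ref{saito}) applies and yields that $\Delta$ is free with $\psi_1,\dots,\psi_n$ as free generators of $\mathrm{Der}(\log\Delta)$, so that the exponents of $\Delta$ are exactly $b_i=\kappa_i$. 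Terao's factorization theorem (\ref{terao}) then immediately gives $P_\Sigma(t)=\prod_{i=1}^n(1+\kappa_i t)$.

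The substantial content, namely that polynomial $\vee$-parallel vector fields are automatically gradient and logarithmic of degree $\kappa$, has already been extracted in Theorem~\ref{gradlog}, so there is essentially no remaining obstacle. The only mildly technical step is the upgrade from generic-point to $P_n$-linear independence, and this is a purely formal observation about the non-vanishing of $\det\Psi$. The theorem is best viewed as a direct corollary of Theorem~\ref{gradlog} combined with the classical Saito--Terao package.
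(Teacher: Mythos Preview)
Your proof is correct and follows exactly the paper's own approach: invoke Theorem~\ref{gradlog}, apply Saito's criterion using the degree sum (\ref{harmon}), and conclude with Terao's factorization theorem. The only addition beyond the paper's one-line proof is your explicit remark that generic-point independence upgrades to $P_n$-independence via $\det\Psi\neq 0$, which is a welcome clarification of a step the paper leaves implicit.
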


The proof follows immediately from Theorem \ref{gradlog} and the Saito criterion.

As one can see from the Euler-Poisson-Darboux type equations \eqref{EPD}  the corresponding potentials $F_1,\dots, F_n$ belong to the algebra of {\it quasi-invariants} of $\mathcal A$
\begin{equation}\label{quasi}
\mathcal Q_{\mathcal A}=\{ p(z) \in \mathbb C[z_1,\dots, z_n]: \partial_{\alpha^\vee}|_{\alpha(z)=0} p(z)=0,\quad \alpha \in \mathcal A \}.
\end{equation}
It would be interesting to understand their role for these algebras (cf. \cite{FVimrn}). 

As we will see now for the classical series the corresponding potentials turn out to be certain deformations of Saito's generators of the algebra of invariants.

 
\section{Analysis of the classical series}

Consider first $\vee$-systems of type $A_{n}$ from \cite{CV}:
$$
A_{n}(c)=\left\{\sqrt{c_{i}c_{j}}(e_{i}-e_{j}),0\leq i<j\leq n\right\},
$$
where all $c_i$ are assumed to be non-zero.
One can check that the corresponding canonical form is non-degenerate if 
$$
\sigma=c_0+c_1+\dots + c_n \neq 0,
$$ 
and the vector $\alpha^\vee$ for $\alpha=\sqrt{c_ic_j} (e_i-e_j)$ has the form
$$\alpha^\vee=\sigma^{-1} \sqrt{c_i c_j}(c_i^{-1}e_i-c_j^{-1} e_j)$$
(see \cite{CV}).
The corresponding KZ equations $\nabla^\vee_\xi \psi=0$ with $\nabla^\vee_\xi$ given by (\ref{KZ}) and $\psi=(\psi_0,\ldots, \psi_n)\in V^*$ have the form 
\begin{equation}\label{ankz}
\partial_i \psi_j=-\kappa\sigma^{-1}\frac{c_j \psi_i-c_i \psi_j}{x_i-x_j}, \quad i\neq j
\end{equation}
with $\partial_i \psi_i$ determined from the relation $\psi_0+\dots+\psi_n=0:$ 
\begin{equation}\label{ankz2}
\partial_i \psi_i=\kappa\sigma^{-1}\sum_{j \neq i} \frac{c_j \psi_i-c_i \psi_j}{x_i-x_j}.
\end{equation}

These equations are nothing but the Jordan-Pochhammer linear system 
for the integrals of the hypergeometric type 
$$I_\lambda(x_0,\dots, x_n)=\int_\gamma \prod_{j=0}^n(x-x_j)^{\lambda_j} dx$$ 
(see e.g. Aomoto \cite{Ao}, Orlik and Terao \cite{OT1}, formula (1) on page 71, Pavlov \cite{Pavlov}, Couwenberg, Heckman,  Looijenga   \cite{CHL}, Section 2.3, Looijenga   \cite{L}). More precisely, we have the following

\begin{thm}\label{potentialsA}
The $\vee$-systems $A_n(c)$ are harmonic 
with the potentials given by 
the Pochhammer type integrals
\begin{equation}\label{integ}
F_\kappa(x_0,\dots, x_n)=\frac{1}{2\pi i}\int_\gamma \prod_{j=0}^n(x-x_j)^{\lambda_j} dx, \quad \kappa=1,2, \dots, n
\end{equation}
where 
$\lambda_j=\kappa \frac{c_j}{\sigma}$ and contour $\gamma$ is a large circle surrounding all $x_0,\dots, x_n.$
\end{thm}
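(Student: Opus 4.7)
The strategy is to construct the required $n$ linearly independent polynomial $\vee$-parallel vector fields of degrees $1, 2, \ldots, n$ by identifying $F_\kappa$ as their potentials via (\ref{gr1}). Since $|A_n(c)| = \binom{n+1}{2} = 1+2+\cdots+n$, the harmonic condition (\ref{harmon}) then follows automatically.

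The key preliminary point is that
$$\sum_{j=0}^n \lambda_j = \kappa\sigma^{-1}\sum_{j=0}^n c_j = \kappa \in \mathbb Z_{>0},$$
so the integrand $\Phi(x) := \prod_{j=0}^n(x-x_j)^{\lambda_j}$ has trivial monodromy along a large circle $\gamma$ and admits a single-valued Laurent expansion $\Phi(x) = \sum_{k\geq 0} P_k(x_0,\ldots,x_n)\, x^{\kappa-k}$ at infinity, whose coefficients $P_k$ are homogeneous polynomials of degree $k$ in the $x_j$'s. By the residue theorem at infinity, $F_\kappa = P_{\kappa+1}$ is a polynomial of total degree $\kappa+1$.

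Setting $\psi_i := \partial_i F_\kappa$, differentiation under the integral sign yields $\psi_i = -\frac{\lambda_i}{2\pi i}\int_\gamma \Phi(x)/(x-x_i)\,dx$. A second differentiation, combined with the partial fraction identity
$$\frac{1}{(x-x_i)(x-x_j)} = \frac{1}{x_i-x_j}\left[\frac{1}{x-x_i} - \frac{1}{x-x_j}\right],$$
reproduces the Jordan--Pochhammer equation (\ref{ankz}) after substituting $\lambda_j = \kappa c_j/\sigma$. The constraint $\sum_j \psi_j = 0$ follows from the logarithmic-derivative identity $\Phi'(x) = \Phi(x) \sum_j \lambda_j/(x-x_j)$, which gives $\sum_j \psi_j = -\frac{1}{2\pi i}\int_\gamma \Phi'(x)\,dx$; this vanishes since $\Phi'$ is single-valued on $\gamma$ and has no $x^{-1}$ term in its Laurent expansion at infinity.

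Finally, $\psi^{(\kappa)}$ is a polynomial vector field of degree $\kappa$ by Theorem \ref{gradlog}, and if $\sum_\kappa a_\kappa \psi^{(\kappa)}$ vanishes at a generic point then $\sum_\kappa a_\kappa F_\kappa$ is a constant polynomial, which by distinct degrees $\kappa+1$ forces $a_\kappa = 0$ for all $\kappa$. The $\psi^{(\kappa)}$ are thus linearly independent at a generic point and give the required $n$ vector fields. The principal technical delicacy is confirming non-vanishing of $P_{\kappa+1}$ as a polynomial in the $x_j$'s; this can be verified by computing its leading symbol in the $c_j$ variables or by specializing to a convenient configuration. The remaining steps are routine applications of differentiation under the integral sign.
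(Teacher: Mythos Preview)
Your approach mirrors the paper's: both verify the Jordan--Pochhammer system via partial fractions, confirm $\sum_j \psi_j = 0$ from $\int_\gamma d\Phi_\lambda = 0$, and extract polynomiality of $F_\kappa$ from the Laurent expansion of $\Phi_\lambda$ at infinity.

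The genuine gap is in your independence argument. You show that no nontrivial $\mathbb{C}$-linear combination $\sum_\kappa a_\kappa \psi^{(\kappa)}$ vanishes identically, but this is strictly weaker than what the definition of \emph{harmonic} requires: the $n$ vector fields must be linearly independent \emph{at a generic point}, i.e.\ the matrix with columns $\psi^{(\kappa)}(z)$ must have full rank for generic $z$ (equivalently, the $\psi^{(\kappa)}$ must be independent over the polynomial ring, which is exactly the hypothesis of Saito's criterion). Distinct homogeneous degrees do not secure this: for instance in $\mathbb{C}^2$ the gradient fields $\psi^{(1)}=(x,y)=\nabla\tfrac{1}{2}(x^2+y^2)$ and $\psi^{(2)}=(x^2,xy)=\nabla\tfrac{1}{6}(2x^3+3xy^2)$ are $\mathbb{C}$-independent of degrees $1$ and $2$, yet $\psi^{(2)}=x\,\psi^{(1)}$ is dependent on $\psi^{(1)}$ at every point.

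The paper closes this gap by computing the Jacobian determinant of the potentials explicitly, obtaining
\[
\det\bigl(\partial F_i/\partial x_j\bigr) \;=\; \lambda_0\lambda_1\cdots\lambda_n \prod_{i<j}(x_i-x_j),
\]
which is nonzero precisely when all $c_i\neq 0$ and $\sigma\neq 0$. This single formula simultaneously settles pointwise independence and the ``principal technical delicacy'' you flagged (non-vanishing of each individual $F_\kappa$). Your final paragraph should be replaced by this Jacobian computation, or by some equivalent argument that genuinely controls the rank of the matrix $(\psi^{(\kappa)}_j)_{\kappa,j}$ rather than merely its $\mathbb{C}$-linear span.
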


\begin{proof}
Let $\Phi_{\lambda}(x; x_0,\dots, x_n)= \prod_{j=0}^n(x-x_j)^{\lambda_j}$. Then $I_\lambda(x_0,\dots, x_n)=\int_\gamma \Phi_{\lambda}(x; x_0,\dots, x_n) dx$ and 
\begin{equation}\label{ankzs}
\psi_j=\partial_j I_\lambda(x_0,\dots, x_n)=-\lambda_j\int_\gamma \frac{\Phi_{\lambda}(x; x_0,\dots, x_n)}{x-x_j}dx.
\end{equation}
Note that for the chosen contour $\gamma$ the integral (\ref{integ}) is well-defined if and only if 
$\lambda_0+\lambda_1+\dots +\lambda_n$ is an integer, which we will assume to be the case.
Then we have
$$\psi_0+\dots+\psi_n= - \int_\gamma d\Phi_\lambda=0.$$
Consider the derivative
$$
\partial_i \psi_j=\lambda_i\lambda_j\int_\gamma \frac{\Phi_{\lambda}}{(x-x_i)(x-x_j)}dx=\frac{\lambda_i\lambda_j}{x_i-x_j}\int_\gamma (\frac{\Phi_{\lambda}}{x-x_i}-\frac{\Phi_{\lambda}}{x-x_j})dx=\frac{\lambda_i\psi_j-\lambda_j \psi_i}{x_i-x_j}
$$
if $i \neq j$ and
$$
\partial_i \psi_i=-\sum_{j \neq i}\frac{\lambda_i\psi_j-\lambda_j \psi_i}{x_i-x_j},
$$
which coincides with the equations (\ref{ankz}), (\ref{ankz2}) with $\lambda_j=\kappa \frac{c_j}{\sigma}.$ 

Note that since $\lambda_0+\dots + \lambda_n=\kappa,$ so we need $\kappa$ to be integer.
We claim that if we choose simply the smallest $\kappa=1,2,\dots,n$ then we will have 
the basic gradient logarithmic vector fields $X$ with components
$$
\xi_i=\lambda_i^{-1} \psi_i, \quad i=0,\dots, n
$$
(note that the canonical form is not standard in this case).
Indeed, $\Phi_{\lambda}$ is meromorphic in $x$ at infinity with the expansion
\begin{equation}
\label{phiexpand}
\Phi_{\lambda}=x^{\kappa} \prod_{i=0}^n(1-\frac{x_i}{x})^{\lambda_i}=x^{\kappa} \prod_{i=0}^n(1-\lambda_i\frac{x_i}{x}+\frac{\lambda_i(\lambda_i-1)}{2}\frac{x_i^2}{x^2}+\dots).
\end{equation}
The contour integral (\ref{integ}) is simply the coefficient at $x^{-1}$ in this expansion, so $F_\kappa$ is polynomial both in $x_0,\dots, x_n$ and $\lambda_0,\dots, \lambda_n.$
Simple algebraic arguments show that the determinant of the matrix of the partial derivatives
$||\partial F_i/\partial x_j||, i,j=0,\dots,n$ is equal to
 $$\det ||\partial F_i/\partial x_j||=\lambda_0\dots \lambda_n \prod_{i<j} (x_i-x_j).$$
 This implies their independence for all non-zero $\lambda_i$, or equivalently, for all non-zero $c_i$ with $\sum c_i=\sigma \neq 0.$ 
Since the degrees of these polynomials in $x$ are the same as in the non-deformed case: $2,3, \dots, n+1,$ by Saito's criterion we have the claim.  \end{proof}

There is a more explicit way to represent potentials $F_{\kappa}$ from Theorem \ref{potentialsA}. Let us introduce deformed Newton sums by
$$
p_s^\lambda = \sum_{i=0}^{n} \lambda_i x_i^s, \quad s \in \mathbb N.
$$
 Further, for a partition $\mu$ define the corresponding deformed power sum  by
$$
p_\mu^\lambda = (p_1^\lambda)^{m_1} (p_2^\lambda)^{m_2}\ldots,
$$
where $m_i$ is the number of parts $i$ in $\mu$. Let $l(\mu)$ be the number of non-zero parts in the partition $\mu$ and define
$$z_\mu = \prod_{j \ge 1} (j^{m_j} m_j!).$$

The $\vee$-system $A_n(c)$ is considered in the subspace where  $p_1^\lambda \equiv 0,$ but it is convenient to keep this polynomial in the formulas below.

\begin{thm}
Potentials \eqref{integ} have the form
\begin{multline}
\label{detfa}
F_\kappa= \frac{(-1)^{\kappa+1}}{(\kappa+1)!} \det  
\begin{pmatrix}
    p_1^\lambda & 1 &0& 0  \ldots  & 0 \\
    p_2^\lambda & p_1^\lambda & 2 & 0 \ldots  & 0 \\
    \vdots & \vdots & \vdots & \ddots & \vdots \\
   p_{\kappa}^\lambda & p_{\kappa-1}^\lambda  & p_{\kappa-2}^\lambda & \dots  & \kappa \\ 
   p_{\kappa+1}^\lambda & p_\kappa^\lambda  & p_{\kappa-1}^\lambda  & \dots  & p_1^\lambda 
\end{pmatrix}
= \sum_{\mu: |\mu|=\kappa+1} (-1)^{l(\mu)} z_\mu^{-1} p_{\mu}^\lambda,
\end{multline}
where $\kappa = 1,\ldots, n$ and the sum is taken over all partitions $\mu$ of $\kappa+1$.
\end{thm}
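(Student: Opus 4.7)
The plan is to deduce both expressions for $F_\kappa$ from a single Laurent expansion of the integrand at infinity. Since $\gamma$ encloses all the $x_j$, formula \eqref{integ} identifies $F_\kappa$ with the coefficient of $x^{-1}$ in $\Phi_\lambda = x^\kappa \prod_{j=0}^n (1-x_j/x)^{\lambda_j}$, i.e., setting $y=1/x$, with the coefficient $[y^{\kappa+1}]\Psi(y)$ in
\[
\Psi(y) := \prod_{j=0}^n (1-x_j y)^{\lambda_j}.
\]
Taking logarithms and using $\log(1-x_j y) = -\sum_{s\geq 1} x_j^s y^s/s$ yields the only analytic input of the proof:
\[
\Psi(y) = \exp\Bigl(-\sum_{s\geq 1}\tfrac{p_s^\lambda}{s}\,y^s\Bigr).
\]

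To obtain the partition sum, I expand each exponential factor $\exp(-p_s^\lambda y^s/s)$ as a Taylor series and collect the coefficient of $y^{\kappa+1}$. Contributions are indexed by tuples $(m_1,m_2,\ldots)$ of non-negative integers with $\sum_s s\,m_s = \kappa+1$, giving
\[
F_\kappa = \sum_{\sum_s s m_s = \kappa+1}\prod_{s\geq 1}\frac{(-1)^{m_s}(p_s^\lambda)^{m_s}}{s^{m_s}\,m_s!}.
\]
These tuples correspond bijectively to partitions $\mu\vdash\kappa+1$ with $m_s$ the multiplicity of the part $s$, and the definitions of $l(\mu)=\sum_s m_s$ and $z_\mu=\prod_s s^{m_s}m_s!$ immediately convert this to $\sum_{\mu\vdash\kappa+1}(-1)^{l(\mu)}z_\mu^{-1}p_\mu^\lambda$, which is the right-hand side of \eqref{detfa}.

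For the determinant, I differentiate $\log\Psi$ and write $\Psi(y) = 1 + \sum_{k\geq 1}a_k y^k$, so $F_\kappa = a_{\kappa+1}$. The relation $\Psi'(y) = -\bigl(\sum_{s\geq 1}p_s^\lambda y^{s-1}\bigr)\Psi(y)$ gives the triangular recurrence
\[
k\,a_k + \sum_{s=1}^{k}p_s^\lambda\,a_{k-s} = 0,\qquad k\geq 1,\quad a_0=1,
\]
which is precisely the deformed analogue of the classical Newton identities (with $a_k$ playing the role of $(-1)^k e_k$ and $p_s^\lambda$ replacing the power sum $p_s$). Applying Cramer's rule to the resulting lower-triangular linear system for $a_1,\ldots,a_{\kappa+1}$, whose coefficient matrix has diagonal $1,2,\ldots,\kappa+1$ and hence determinant $(\kappa+1)!$, and then cyclically moving the inhomogeneous column from last to first position, produces the displayed determinant with the prefactor $(-1)^{\kappa+1}/(\kappa+1)!$.

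No step is genuinely hard; the only point needing care is sign bookkeeping, both in extracting the overall $(-1)^{\kappa+1}$ from the column rearrangement in Cramer's rule and in accounting for the slightly irregular last row of the displayed matrix (no entry $\kappa+1$ on the super-diagonal, because that factor is already absorbed into the $(\kappa+1)!$ in the denominator). A useful sanity check throughout is the specialization $\lambda_j\equiv 1$: then $\Psi(y) = \prod_j(1-x_j y)$ and $F_\kappa = (-1)^{\kappa+1}e_{\kappa+1}$, so both formulas in \eqref{detfa} reduce to the classical expressions of the elementary symmetric polynomials in power sums.
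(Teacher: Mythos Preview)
Your proof is correct. Both you and the paper start from the same key observation: the contour integral picks out the $y^{\kappa+1}$ coefficient of $\Psi(y)=\prod_j(1-x_jy)^{\lambda_j}=\exp\bigl(-\sum_{s\ge1}p_s^\lambda y^s/s\bigr)$. From there, however, the routes diverge. The paper argues that since the $p_j^\lambda$ (for $1\le j\le\kappa+1$) are algebraically independent, the coefficient of $y^{\kappa+1}$ is a universal polynomial in these variables, independent of the particular $\lambda$; hence one may specialise to $\lambda_i\equiv1$ and quote the classical identities from Macdonald. You instead carry out the derivation directly: expanding the exponential termwise yields the partition sum, while logarithmic differentiation produces the Newton-type recurrence $k\,a_k+\sum_{s=1}^k p_s^\lambda a_{k-s}=0$, from which Cramer's rule gives the determinant. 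Your approach is more self-contained (no external citation needed) and makes the structure of both formulas transparent; the paper's reduction is slicker but leans on a reference for the actual identity.
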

\begin{proof}
We know that $F_\kappa(x_0,\dots, x_n)$ equals  the coefficient at $x^{-\kappa-1}$ in the expansion of the function $\widehat\Phi_\lambda = x^{-\kappa} \Phi_\lambda$, where $\Phi_\lambda$ is given by \eqref{phiexpand}.
On the other hand 
$$
\widehat \Phi_\lambda = e^{\log  \prod_{i=0}^n(1-\frac{x_i}{x})^{\lambda_i}} = e^{\sum_{i=0}^n \lambda_i \log  (1-\frac{x_i}{x})} 
= e^{-p_1^\lambda x^{-1} -\frac12 p_2^\lambda x^{-2} -\frac13 p_3^\lambda x^{-3} -\ldots}.
$$
The required coefficient is a polynomial in $p_j^\lambda$ with $1\le j \le \kappa+1$. Note that all these polynomials $p_j^\lambda$ are algebraically independent. Therefore the required coefficient is a polynomial in variables $p_j^\lambda$ whose coefficients do not depend on $\lambda$, and it is sufficient to establish the statement when $\lambda_i=1$ for all $i,$  which is well known (see pages 25, 28 in \cite{Macd}). (Note also that the statement for $\lambda_0=\lambda_1=\ldots=\lambda_n$ is contained in \cite{M}, and that a  related analysis of functions $F_\kappa$ is contained in the recent paper \cite{KK}, Remark 6.6).
\end{proof}


Consider now the $\vee$-systems of $B_n$-type \cite{CV}
$$
B_{n}(c)=\left\{\sqrt{c_i c_j} (e_i\pm e_j),\, 1\le i <j \le n; \quad
\sqrt{2c_i(c_i+c_0)}e_i, \, 1\le i \le n\right\},
$$
where all $c_i$ with $i\ge 1$ are assumed to be non-zero.
Let us also assume for the beginning that $c_i+c_0 \neq 0$ for all $i=1,\dots,n,$ so the corresponding 
arrangement is of type $B_n.$

The canonical form has the matrix $G=2\sigma C, \, C=diag \,(c_1,\dots, c_n)$ with 
$$
\sigma=c_0+c_1+\dots + c_n,
$$
so for $\alpha=\sqrt{c_i c_j} (e_i\pm e_j)$ we have
$$\alpha^\vee=2^{-1}\sigma^{-1} \sqrt{c_i c_j}(c_i^{-1}e_i\pm c_j^{-1}e_j)$$
and for $\alpha=\sqrt{2c_i(c_i+c_0)}e_i$ we have
$$\alpha^\vee=(2\sigma c_i)^{-1}\sqrt{2c_i(c_i+c_0)}e_i.$$
The corresponding equations (\ref{KZ}) for $\psi=(\psi_1, \ldots, \psi_n)\in V^*$ have the form
\begin{equation}\label{bnkz}
2\sigma\kappa^{-1}\partial_i \psi_j=-\frac{c_j \psi_i-c_i \psi_j}{x_i-x_j}+\frac{c_j \psi_i+c_i \psi_j}{x_i+x_j}, \quad i\neq j,
\end{equation}
\begin{equation}\label{bnkz2}
2\sigma\kappa^{-1}\partial_i \psi_i=\sum_{j\neq i}(\frac{c_j \psi_i-c_i \psi_j}{x_i-x_j}+\frac{c_j \psi_i+c_i \psi_j}{x_i+x_j}) +\frac{2(c_i+c_0)\psi_i}{x_i}.
\end{equation}
Consider the product
$$\Phi_{\lambda}= \prod_{j=1}^n(x^2-x_j^2)^{\lambda_j}x^{2\lambda_0}$$
and the corresponding integral
\begin{equation}\label{integB}
J_\lambda(x_1,\dots, x_n)=\int_\gamma \Phi_{\lambda} dx=\int_\gamma \prod_{j=1}^n(x^2-x_j^2)^{\lambda_j}x^{2\lambda_0} dx,
\end{equation}
where $\gamma$ as before is a large circle. The integral is well-defined if the sum
$$2(\lambda_0+\lambda_1+\dots +\lambda_n) \in \mathbb Z.$$

\begin{thm}
\label{bnth}
$\vee$-systems $B_n(c)$ with $c_j+c_0\neq 0$ for all $j=1,\dots, n$ are harmonic with the corresponding potentials $F_k=\frac{1}{2\pi i}J_{\lambda},$ where $J_{\lambda}$ are contour integrals (\ref{integB}) with
$\lambda_i=(2k-1) \frac{c_i}{2\sigma}$ and $k=1,\dots, n.$ The corresponding value of $\kappa$ is
$2k-1$.
\end{thm}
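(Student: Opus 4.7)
The approach is to mimic the $A_n$ proof of Theorem \ref{potentialsA} closely, adapted to the orthogonal symmetry of $B_n$. First I would set $\psi_j := \partial_j J_\lambda$ for $j=1,\dots,n$, so that $\psi = (\psi_1,\dots,\psi_n)$ is manifestly a gradient with polynomial potential $F_k = J_\lambda/(2\pi i)$. The task then splits into two parts: verifying that this $\psi$ satisfies the KZ system (\ref{bnkz})--(\ref{bnkz2}) with $\kappa = 2k-1$ and $\lambda_i = \kappa c_i/(2\sigma)$, and showing that $F_1,\dots,F_n$ give $n$ linearly independent polynomial solutions whose degrees sum to $|B_n(c)|=n^2$, so that harmonicity follows.

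For the off-diagonal equations, differentiating under the integral sign using $\partial_j \Phi_\lambda = -2\lambda_j x_j \Phi_\lambda / (x^2-x_j^2)$ gives
$$\partial_i\psi_j = 4\lambda_i\lambda_j x_i x_j \oint_\gamma \frac{\Phi_\lambda\,dx}{(x^2-x_i^2)(x^2-x_j^2)}, \quad i\ne j.$$
The partial fraction identity $\frac{1}{(x^2-x_i^2)(x^2-x_j^2)} = \frac{1}{x_i^2-x_j^2}\bigl(\frac{1}{x^2-x_i^2} - \frac{1}{x^2-x_j^2}\bigr)$ rewrites the right-hand side as an explicit linear combination of $\psi_i$ and $\psi_j$, and a routine manipulation shows it agrees with (\ref{bnkz}) once $\lambda_i = \kappa c_i/(2\sigma)$.

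The diagonal equation is the main technical point: direct differentiation produces the double-pole integral $\oint \Phi_\lambda\,(x^2-x_i^2)^{-2}dx$, which must be eliminated. I would do this via the vanishing identity $0 = \oint_\gamma d\bigl[x\Phi_\lambda/(x^2-x_i^2)\bigr]$, expanding $\Phi_\lambda'/\Phi_\lambda = \sum_{j\ge 1} 2\lambda_j x/(x^2-x_j^2) + 2\lambda_0/x$. Using the analogous identity $\frac{x^2}{(x^2-x_i^2)(x^2-x_j^2)} = \frac{1}{x_i^2-x_j^2}\bigl(\frac{x_i^2}{x^2-x_i^2}-\frac{x_j^2}{x^2-x_j^2}\bigr)$, one can solve for the double-pole integral in terms of the $\psi_j$'s and substitute. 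The crucial check is that the contribution of the $2\lambda_0/x$ piece (which comes solely from the $x^{2\lambda_0}$ factor in $\Phi_\lambda$) combines with the term $2\lambda_i/x_i$ to produce exactly $2(\lambda_i+\lambda_0)\psi_i/x_i = (\kappa/\sigma)\cdot 2(c_i+c_0)\psi_i/x_i$, which is the boundary term in (\ref{bnkz2}).

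Finally I would address polynomiality, degree, and independence. Since $2\sum_{i=0}^n\lambda_i = \kappa = 2k-1$, the Laurent expansion $\Phi_\lambda = x^{2k-1}\prod_j(1-x_j^2/x^2)^{\lambda_j}$ shows that the residue at infinity is a homogeneous polynomial of degree $2k$ in $x_1,\dots,x_n$ (only even powers of the coordinates appear, as required by the $B_n$ symmetry), so $F_k$ is polynomial of degree $2k$ and $\psi_j = \partial_j F_k$ has degree $\kappa_k = 2k-1$. The exponent sum is $1+3+\cdots+(2n-1) = n^2 = |B_n(c)|$, matching \eqref{harmon}. The hardest remaining step is linear independence of the $\psi^{(1)},\dots,\psi^{(n)}$: I would compute the Jacobian $\det\|\partial_i F_k\|$ by extracting the top Laurent coefficients, expecting a factor $\prod_i\lambda_i \cdot \prod_i x_i \cdot \prod_{i<j}(x_i^2-x_j^2)$ (up to sign and scalar), which is non-zero under the standing hypothesis that all $c_i\ne 0$ and $c_i+c_0\ne 0$. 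Together with Theorem \ref{gradlog} and Saito's criterion this yields harmonicity and freeness with exponents $1,3,\dots,2n-1$, as claimed.
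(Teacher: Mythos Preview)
Your proposal is correct and follows essentially the same route as the paper: define $\psi_j=\partial_j J_\lambda$, verify the off-diagonal equations by the partial-fraction identity for $(x^2-x_i^2)^{-1}(x^2-x_j^2)^{-1}$, and handle the diagonal equation via the vanishing of $\oint_\gamma d\bigl[x\Phi_\lambda/(x^2-x_i^2)\bigr]$, then conclude by degree count $1+3+\cdots+(2n-1)=n^2$ and Saito's criterion. The only organizational difference is that the paper computes both sides of the diagonal equation as integrals and observes their difference is exactly this total derivative, whereas you use the same identity to eliminate the double-pole term first; the content is identical, and your treatment of independence via the Jacobian is at the same level of detail as the paper's ``similarly to the $A_n$ case'' remark.
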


\begin{proof}
We have
\begin{equation}\label{bnkzs}
\psi_j=\partial_jJ_\lambda(x_1,\dots, x_n)=-2\lambda_j\int_\gamma \frac{x_j \Phi_{\lambda}}{x^2-x_j^2}dx.
\end{equation}
One can easily check that
$$
\partial_i \psi_j=4\lambda_i\lambda_j\int_\gamma \frac{x_ix_j \Phi_{\lambda}}{(x^2-x_i^2)(x^2-x_j^2)}dx=-\frac{\lambda_j\psi_i-\lambda_i \psi_j}{x_i-x_j}+\frac{\lambda_j\psi_i+v_i \psi_j}{x_i+x_j}
$$
when $i \neq j.$ When $i=j$ we have
$$
\partial_i \psi_i=-2\int_\gamma (\frac{\lambda_i}{(x^2-x_i^2)}-\frac{2\lambda_i(\lambda_i-1)x_i^2}{(x^2-x_i^2)^2}) \Phi_{\lambda} dx =-2\int_\gamma (\frac{\lambda_i(2\lambda_i-1)}{(x^2-x_i^2)}-\frac{2\lambda_i(\lambda_i-1)x^2}{(x^2-x_i^2)^2})\Phi_{\lambda} dx.
$$
On the other hand 
$$
\sum_{j\neq i}(\frac{\lambda_j \psi_i-\lambda_i \psi_j}{x_i-x_j}+\frac{\lambda_j \psi_i+\lambda_i \psi_j}{x_i+x_j}) +\frac{2(\lambda_i+\lambda_0)\psi_i}{x_i}$$
$$=-2\int_\gamma (\sum_{j\neq i}\frac{2\lambda_i\lambda_jx^2 \Phi_{\lambda}}{(x^2-x_i^2)(x^2-x_j^2)}+\frac{2\lambda_i(\lambda_i+\lambda_0)\Phi_{\lambda} }{x^2-x_i^2})dx.
$$
Since the difference of the right hand sides of the last two formulas is the integral of the total derivative $\int_\gamma d\frac{x \Phi_{\lambda}}{x^2-x_i^2}$, 
we see that the integrals (\ref{bnkzs}) satisfy equations (\ref{bnkz}),(\ref{bnkz2}) with $\lambda_j=\kappa\frac{c_j}{2\sigma}.$

Note that $2(\lambda_0+\lambda_1+\dots +\lambda_n)=\kappa$, so $\kappa$ must be an integer.
It is easy to see that the integral (\ref{integB}) vanishes for even $\kappa,$ so the minimal values of $\kappa$ are $1,3,5,\dots, 2n-1,$ which are the exponents of the Weyl group $B_n$.
Similarly to the $A_n$ case, one can show that the corresponding logarithmic vectors fields are independent for all non-zero $c_i$, so the claim follows again from Saito's criterion.
\end{proof}

As in the $A_n$ case, the potential $F_k$ can be  given by formula \eqref{detfa} with $\kappa = k-1$, where one has to replace $p_s^\lambda$ with 
$$
q_{s}^\lambda =\sum_{i=1}^n \lambda_i x_i^{2s}.
$$ 


If $c_1=c_2=\dots=c_k=-c_0$ for some $k=1,\dots, n-1$ then associated arrangement $\Delta$ is not of Coxeter type.
It was studied first by Zaslavsky \cite{Zaslavsky} and is usually denoted as $D_n^k$ \cite{OS}. It is known to be free with the exponents of $1, 3, \dots, 2n-3, 2n-k-1$ (see \cite{OS,OT}).
The first $n-1$ generating potentials $F$ can be found by the same integrals (\ref{integB}) with $\lambda_i=\kappa \frac{c_i}{2\sigma}$ and $\kappa=1,3, \dots, 2n-3,$ but the last one of the required degree $2n-k$ appears not to exist for general values of the remaining parameters $c_{k+1}, \dots, c_n$ (see below the example with $k=2, n=3, c_3=3$). 

For special $c$ this is however possible. Let 
$$c_1=c_2=\dots=c_k=-c_0=1, \, c_{k+1}=\dots=c_n=2,$$ 
then $\sigma=2n-k-1$ and the integral (\ref{integB}) becomes
$$
J_\lambda(x_1,\dots, x_n)=\int_\gamma \prod_{i=1}^k(x^2-x_i^2)^{1/2}\prod_{j=k+1}^n(x^2-x_j^2)x^{-1} dx.
$$
Taking now small contour $\gamma$ surrounding $x=0$ we have up to a non-essential multiple
$$
J_\lambda(x_1,\dots, x_n)=x_1\dots x_k (x_{k+1}\dots x_n)^2,
$$
which is the remaining potential for the arrangement $D_n^k$ (cf. \cite{OT}).

Note that this case corresponds to the restriction of the Coxeter arrangement of type $D_{k+2n}$ to the subspace
$x_{k+1}=x_{k+n+1}, \, x_{k+2}=x_{k+n+2},\, x_{k+n}=x_{k+2n}.$ So one might expect that the restrictions of Coxeter systems are always harmonic.
This however is not true as the following example shows.

Consider the restriction of the Coxeter system $D_5$ to the subspace $x_3=x_4=x_5.$ The corresponding $\vee$-system $B_3(-1; 1,1,3)$ is of type $D_3(3/2, 3/2)$ in the notations of \cite{FV2} and belongs to the deformation family of the roots of the exceptional Lie superalgebra $D(2,1,\lambda).$

\begin{thm}
The restricted Coxeter $\vee$-system  $B_3(-1; 1,1,3)$  is not harmonic.
\end{thm}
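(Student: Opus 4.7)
The plan is to show directly that the space of polynomial $\vee$-parallel vector fields of degree $3$ for the $\vee$-system $\mathcal A = B_3(-1;1,1,3)$ is only one-dimensional.  Since $|\mathcal A| = 7$ and the associated arrangement is of type $D_3^1$ with exponents $\{1,3,3\}$, harmonicity would require two linearly independent polynomial $\vee$-parallel fields of degree $3$, in addition to the Euler field of degree $1$.  First I would list the seven covectors explicitly as $e_1 \pm e_2,\ \sqrt{3}(e_1 \pm e_3),\ \sqrt{3}(e_2 \pm e_3),\ 2\sqrt{3}\, e_3$, compute $G_{\mathcal A} = 8\,\mathrm{diag}(1,1,3)$, and by Theorem \ref{gradlog} reduce the problem to counting polynomial potentials $F$ of degree $4$ that satisfy the EPD system \eqref{EPD} with $\kappa = 3$.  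The integral formula of Theorem \ref{bnth} still makes sense by analytic continuation in $c_0$ and produces one such potential $F_2$, proportional to $-5(x_1^4 + x_2^4) + 6\,x_1^2 x_2^2 + 18(x_1^2 + x_2^2)x_3^2 + 3\,x_3^4$, so the entire content of the theorem is the uniqueness of $F_2$ up to a scalar.

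To establish this uniqueness, I would exploit the symmetry group $G = \Z_2^3 \rtimes S_2$ preserving $\mathcal A$ (sign changes of each coordinate together with the swap $x_1 \leftrightarrow x_2$).  Since the EPD system is $G$-equivariant, the $15$-dimensional space of homogeneous degree-$4$ polynomials splits into $G$-isotypic components and the solution space is a $G$-subrepresentation, so it suffices to handle each component separately.  In the trivial component I would write $F = a(x_1^4 + x_2^4) + b\,x_1^2 x_2^2 + c(x_1^2 + x_2^2)x_3^2 + d\,x_3^4$; the quasi-invariance conditions $\partial_{\alpha^\vee}F|_{\alpha(z)=0} = 0$ for the seven hyperplanes collapse to the two relations $c = 3b$ and $d = 3(a+b)$, leaving a two-parameter family of gradient logarithmic potentials.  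A single further EPD relation, for instance equating $\partial_1^2 F$ with the right-hand side of \eqref{EPD} at $\xi = \eta = e_1$, then imposes $6a + 5b = 0$, cutting the trivial component down to the one-dimensional line spanned by $F_2$.

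For each nontrivial $G$-isotypic component I would check that already the quasi-invariance conditions force the ansatz to vanish.  For example, in the antisymmetric-under-swap sign-invariant sector $F = A(x_1^4 - x_2^4) + B(x_1^2 - x_2^2)x_3^2$, evaluating $(\partial_1 + \partial_2)F$ on $x_1 + x_2 = 0$ produces $-8A\,x_2^3 - 4B\,x_2 x_3^2$, forcing $A = B = 0$.  Short computations of the same kind dispose of the two remaining one-dimensional character sectors (where the ansatz has $(a,b,c)$-parity $(1,1,0)$, and the condition on the hyperplane $x_1 + x_3 = 0$ already suffices) and of the two-dimensional irreducible sector (whose members are potentials odd in $x_3$ with mixed $(x_1,x_2)$-parity, killed by the combined conditions at $x_3 = 0$ and at $x_1 \pm x_2 = 0$).

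The main obstacle is not conceptual but organisational: for each isotypic component one must select a small set of hyperplanes whose quasi-invariance conditions give enough linear constraints to pin all free coefficients down to zero.  Once all five sectors are handled in this way, the total dimension of polynomial $\vee$-parallel vector fields of degree $3$ is exactly one, and hence harmonicity fails by the definition given in Section 3.
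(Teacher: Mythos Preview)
Your approach is valid and reaches the same conclusion as the paper, but by a different route. The paper never invokes the known exponents $\{1,3,3\}$ of the $D_3^1$ arrangement to force the $\kappa_i$; instead it checks directly that there are no quasi-invariants of degree~$3$ and that the degree-$2$ quasi-invariant is unique, which pins down $(\kappa_1,\kappa_2,\kappa_3)=(1,3,3)$ as the only candidate. It then avoids computing the EPD solution space at all: since the two-dimensional space of degree-$4$ quasi-invariants already contains $F_1^2$, any pair of degree-$4$ potentials would satisfy a relation $aF_2+bF_3=F_1^2$, whence $a\,\nabla F_2+b\,\nabla F_3=2F_1\,\nabla F_1$ and the three gradients are polynomially dependent. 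Your argument trades this trick for a direct computation that the EPD space at $\kappa=3$ is one-dimensional; this is more laborious but equally decisive, and has the bonus of exhibiting the surviving potential explicitly. Conversely, your shortcut via Theorem~4 and the known exponents is cleaner than the paper's degree-by-degree quasi-invariant count.

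One organisational point to tighten: your enumeration of the $G$-isotypic components in degree~$4$ is not quite right. The parity sector $(-,-,+)$ is three-dimensional, splitting as a two-dimensional $S_2$-symmetric piece $\langle x_1x_2(x_1^2+x_2^2),\,x_1x_2x_3^2\rangle$ and a one-dimensional antisymmetric piece $\langle x_1x_2(x_1^2-x_2^2)\rangle$, not ``two one-dimensional character sectors''. Likewise the isotypic component for the two-dimensional irreducible (parities $(+,-,-)$ and $(-,+,-)$) is six-dimensional, carrying three copies of that irrep. The quasi-invariance checks still kill everything outside the trivial component --- for instance, in $(+,-,-)$ the condition at $x_3=0$ already forces the $x_1^2x_2x_3$ and $x_2^3x_3$ coefficients to vanish, and then $x_1=x_2$ disposes of $x_2x_3^3$ --- but you should account for all $15$ dimensions when you write it up. Finally, note that the existence of the explicit $F_2$ is not needed for the theorem: it suffices that the EPD space at $\kappa=3$ has dimension at most one, so you can drop the analytic-continuation remark if you prefer.
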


\begin{proof}
We have 7 hyperplanes in the corresponding arrangements. 
Assume that there are polynomial solutions for the corresponding system (\ref{sections}) for
$\kappa_1 \leq \kappa_2 \leq \kappa_3$ with
$$\kappa_1+\kappa_2+\kappa_3=7.$$
Direct check shows that there are no quasi-invariants of degree 3 and the 
space of quasi-invariants of degree 4 is two-dimensional.  We have $\kappa_1\geq 1, \kappa_2\geq 3$, so 
$\kappa_3 \leq 3$ and the only possible choice is
$\kappa_1=1, \kappa_2= \kappa_3=3.$ 
As the  space of quasi-invariants of degree 4 contains the square of the quasi-invariant  of degree 2  one cannot have three independent solutions of the system
\eqref{sections} at the specified $\kappa_i$.
\end{proof}

Note that the corresponding arrangement can be given by
$$
x_3(x_1^2-x_2^2)(x_1^2-x_3^2)(x_2^2-x_3^2)=0
$$
and has Poincar\'e polynomial 
$$P_{\Sigma}(t)=(1+t)(1+3t)^2.$$ 
It is free with a basis of logarithmic vector fields
$$
X_1=\sum_{i=1}^3x_i\partial_i, \, X_2=\sum_{i=1}^3x_i^3\partial_i, \, X_3=x_1x_2x_3^2\sum_{i=1}^3x_i^{-1}\partial_i
$$
(see \cite{OT}, page 251). Note that the restriction of the $D_5$ invariant $x_1\dots x_5$ gives the polynomial
$x_1x_2x_3^3$ of degree 5.

\section{Coxeter arrangements and Saito flat coordinates}

Let $G$ be an irreducible finite Coxeter group generated by reflections in a real Euclidean space $V$ of dimension $n$ and 
$\Delta$ be the set of all corresponding reflection hyperplanes.
Define the corresponding Coxeter root system $\mathcal R$ as a set of normals chosen in a $G$-invariant way. 
Note that we have either 1 or 2 different orbits of $G$ on $\mathcal R,$ 
so such a system in general depends on the additional parameter $q=|\alpha|/|\beta|$, which is a ratio 
of the lengths of the roots from two different orbits.

The positive part $\mathcal A=\mathcal R_+$ of Coxeter root system is known to be a $\vee$-system (\cite{V1}, see also \cite{MG}), 
which we call {\it Coxeter $\vee$-system}. We are going to show that it is harmonic and that the corresponding potentials of the gradient logarithmic vector fields are given by Saito flat coordinates \cite{Saito2}.

Recall briefly the definition of these remarkable coordinates, which can be considered as a canonical choice 
of generators in the algebra of $G$-invariant polynomials $S^G(V).$ 
Let $y_1,\ldots, y_n$ be any set of homogeneous generators in $S^G(V)$ of degrees $d_1>d_2 \geq d_3 \geq \dots >d_n=2.$ 
The image of the Euclidean contravariant metric on $V$ is degenerate on the orbit space $V/G$, but its Lie derivative along well-defined vector field $\frac{\partial}{\partial y_1}$ gives flat metric $\eta$ (called {\it Saito metric}), which is non-degenerate everywhere \cite{Saito, D}. 

The corresponding flat coordinates $t_1, \ldots, t_n \in S^G(V)$ are called {\it Saito flat coordinates}. They were found explicitly by K. Saito et al in \cite{SYS} for all the cases except $E_7, E_8$ (for the latter cases see \cite{N}, \cite{Ab}, \cite{Tal}).
These coordinates play an important role in 2D topological field theory \cite{DVV} and related theory of Frobenius manifolds developed by Dubrovin \cite{D1, D, D2}. In the $A_n$ case they appear in the theory of the dispersionless KP hierarchy \cite{NKNT}.

For the classical Coxeter groups of types $A_n$ and $B_n$ the Saito coordinates can be written as the residues at infinity \cite{IN, DVV}:
$$
t_k = Res_\infty \prod_{i=1}^{n+1} (x-x_i)^{\frac{k}{n+1}}, \quad \sum_{i=1}^{n+1} x_i =0
$$
in type $A_{n}$ and
$$
t_k = Res_\infty \prod_{i=1}^{n} (x^2-x_i^2)^{\frac{2k-1}{2n}} 
$$
in type $B_n.$ 
Comparing this with the formulas (\ref{integ}), (\ref{integB}) we see that they coincide with the potentials of the $A_n(c)$-type $\vee$-systems with $c_0=c_1=\dots = c_n$  and of $B_n(c)$-type $\vee$-systems with $c_0=0, \,c_1=\dots = c_n$ respectively. 

We also note that in the $A_n$ case the Saito coordinate $t_k$ is proportional to the Jack polynomial $J^{\alpha}_{[k+1]},$ corresponding to a single row Young diagram with $k+1$ boxes  and the parameter $\alpha = -\frac{n+1}{k}$ (see \cite{St}).

It turns out that this link with harmonic $\vee$-systems is not accidental.

\begin{thm}
The Coxeter $\vee$-system $\mathcal R_+$ is harmonic. In the case when all the normals have the same length the potentials of the corresponding gradient logarithmic vector fields are the Saito flat coordinates $t_1,\dots, t_n.$
\end{thm}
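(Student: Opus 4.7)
My strategy is to produce $n$ independent polynomial $\vee$-parallel vector fields whose degrees sum to $|\mathcal R_+|$, thereby verifying the harmonicity condition \eqref{harmon}. By Theorem \ref{gradlog}, any polynomial $\vee$-parallel vector field is a gradient $\nabla F$ whose potential satisfies the EPD system \eqref{EPD}, so the task reduces to producing $n$ polynomial solutions $F_1, \dots, F_n$ of \eqref{EPD} whose degrees are the fundamental invariant degrees $d_1, \dots, d_n$ of $G$, after which harmonicity will follow from Chevalley's theorem (linear independence of the gradients) and the standard Coxeter identity $\sum_i (d_i - 1) = |\mathcal R_+|$.

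I would seek the potentials inside the ring $S^G(V)$ of $G$-invariants, which is a subalgebra of the quasi-invariants \eqref{quasi}: for any $G$-invariant $F$ the derivative $\p_{\a^\vee} F$ is antisymmetric under the reflection in $\a^\perp$ and hence vanishes on $\a(z)=0$, so the right-hand side of \eqref{EPD} is a polynomial, consistent with $F$ being polynomial. Contracting \eqref{EPD} with $z$ on each side and applying Euler's identity together with the well-distributedness $\sum_{\a \in \mathcal R_+} \a \otimes \a = \mu G$ of a Coxeter $\vee$-system yields the scalar relation $d(d-1)F = \kappa\mu d F$, which with the canonical normalisation $\mu = 1$ forces $\kappa = d-1$. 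Thus the only admissible value of the spectral parameter for a homogeneous invariant potential of degree $d$ is $\kappa = d-1$, which matches the Coxeter exponent.

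The key step is to show that such an invariant potential actually exists in each fundamental degree $d_i$. In the equal-length case I would take $F_i = t_i$ to be Saito's flat coordinate and invoke \cite{FS} to verify that \eqref{EPD} with $\kappa = d_i - 1$ is equivalent to the flatness of the Saito metric obtained by Lie differentiating the Euclidean contravariant form on $V/G$ along $\partial/\partial y_1$. This simultaneously establishes harmonicity in the simply-laced case and identifies the potentials with the Saito flat coordinates, giving the second statement of the theorem. In the multiply-laced case the Saito coordinates themselves do not satisfy \eqref{EPD}, and I would construct a one-parameter deformation of the flat coordinates indexed by the ratio $q = |\a|/|\b|$ of the two root lengths; existence can be argued degree by degree, with the simply-laced specialisation providing the boundary condition, and explicit formulae are to be developed in the sequel. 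Linear independence of the resulting $\nabla F_i$ follows from the non-vanishing of the Jacobian of the fundamental invariants.

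The main obstacle is the verification of the full tensor EPD identity \eqref{EPD}, as opposed to its scalar contraction, both in the equal-length case (where it becomes the non-trivial flatness of the Saito metric) and in its deformation to multiply-laced groups. For the classical types $A_n$ and $B_n$ this was already settled in Theorems \ref{potentialsA} and \ref{bnth} by means of the explicit Pochhammer-type residue formulae, which automatically satisfy \eqref{EPD} at every relevant value of $\kappa$. For the exceptional types the verification can be done either case by case using \cite{FS} in the simply-laced situation and a direct computation for the deformation, or uniformly via the almost-dual Frobenius manifold structure on $V/G$ in the sense of Dubrovin, whose flat coordinates correspond precisely to the $\vee$-parallel gradient fields on $V$.
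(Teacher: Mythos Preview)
Your outline matches the paper's proof almost exactly: the equal-length case is reduced to the result of \cite{FS}, the classical multiply-laced case $B_n$ is covered by Theorem~\ref{bnth}, and the remaining multiply-laced cases $F_4$ and $I_2(2p)$ are to be handled directly. Your scalar contraction argument identifying $\kappa = d-1$ is correct and matches Theorem~\ref{gradlog}.

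The gap is that you do not actually close the cases $F_4$ and even dihedral. The paper does not argue existence of the deformed potentials abstractly; it simply writes them down. For $F_4$ the authors exhibit, via a Mathematica computation, explicit $t$-deformations of the Saito basic invariants $I_2, I_6, I_8, I_{12}$ that solve \eqref{EPD}, and for $I_2(2p)$ they give $F_1 = z\bar z$ and $F_2 = z^{2p}+\bar z^{2p} + \frac{2(2p-1)}{p-1}\frac{a^2-b^2}{a^2+b^2}(z\bar z)^p$ and check directly. Your phrase ``existence can be argued degree by degree, with the simply-laced specialisation providing the boundary condition'' is not a proof: the EPD system \eqref{EPD} at a given $\kappa$ is an overdetermined linear system on the space of invariants of degree $\kappa+1$, and nothing in your argument shows it has a nontrivial solution for every $q$. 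Your alternative route via almost duality is also not available here --- the paper itself flags the question of what structure is almost-dual to a general Coxeter $\vee$-system with unequal root lengths as open in the concluding remarks, so invoking it would be circular.

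In short, your strategy is the paper's strategy, but the proof is not complete until the $F_4$ and $I_2(2p)$ potentials are actually produced and verified.
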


\begin{proof}
In the case when all the vectors are normalised to have the same length this follows from the results of \cite{FS}, where it was shown that the Saito polynomials satisfy the corresponding system \eqref{EPD} with $\kappa= \deg t_i-1$ being the corresponding exponents of the Coxeter group. 

This covers completely one-orbit cases: simply laced $ADE$ as well as $H_3, H_4$ and odd dihedral groups $I_{2}(2k+1).$ The $B_n$ case follows from Theorem \ref{bnth}:
for a general choice of normals
$$
\mathcal B_n=\{e_i\pm e_j, \,\, \sqrt{2(1+c_0)}e_i, \,\, 1 \leq i < j \leq n\}
$$
the potentials are given by 
$$
F_k = Res_\infty \prod_{i=1}^{n} [x^{2c_0}(x^2-x_i^2)]^{\frac{2k-1}{2n+2c_0}}, \,\, k=1, \dots, n 
$$
(the case of equal lengths corresponds to $c_0=0$).
Thus it remains to consider only the case $F_4$ and even dihedral groups $I_{2}(2p).$

The Coxeter $\vee$-system of type $F_4$ consists of the following covectors:  
$$e_i\pm e_j, \,\, t\sqrt{2}e_i, \,\, \frac{t\sqrt{2}}{2}(e_1\pm e_2 \pm e_3 \pm e_4), 1\leq i < j \leq 4.$$ In the case $t=1$ all the roots have equal length, the case $t=1/\sqrt{2}$ corresponds to the root system $F_4.$
In the complex case we have to add that $t^2\ne -1$ for the non-degeneracy of the corresponding bilinear form. 

Consider the polynomials 
$$
I_n=\sum_{i<j}^4 (x_i-x_j)^n+(x_i+x_j)^n. 
$$
The polynomials $I_2, I_6, I_8, I_{12}$ are basic invariants for the Weyl group of type $F_4$ (see e.g. \cite{SYS}). 
The Mathematica calculations lead to the following potentials   
\begin{gather*}
I_2, 648 (1 + t^2) I_6 - 5 (5 + 4 t^2) I_2^3, \\
 69984 (1 + t^2)^2 I_8 - 9072 (7 + 2 t^2) (1 + t^2) I_2 I_6 + 35 (49 + 46 t^2 + 4 t^4) I_2^4, \\
10077696 (1 + t^2)^3  I_{12}  - 384912 (11 + 8 t^2) (1 + t^2)^2  I_8 I_2^2 + 
 769824 (4 t^2-11) (1 + t^2)^2 I_6^2  \\
+ 7128 (319 + 376 t^2 + 112 t^4) (1 + t^2)  I_6 I_2^3 - 
 11 (3641 + 7032 t^2 + 4560 t^4 + 1048 t^6) I_2^6.
 \end{gather*}
Note that at $t=1$ the above potentials are proportional to the corresponding Saito flat coordinates \cite{SYS} (there seem to be typos in \cite{SYS} in the expressions for the 6th and 12th order polynomials). 

Consider now even dihedral case $I_2(2p)$ with $p>1.$ Let us fix the corresponding vectors as $\a_k=a (\cos \varphi_k, \sin \varphi_k)$, $\b_k=b(\cos \psi_k, \sin \psi_k)$, where $$\varphi_k=\pi/2+\pi k/p, \, \psi_k =\pi/2+ \pi/2p + \pi k/p, \quad k=0,1,\ldots, p-1.$$ One can show that in the complex coordinate $z=x_1+ix_2$  the potentials of the corresponding $\vee$-system are
$$
F_1=z\bar z, \quad F_2=z^{2p} + \bar z^{2p} +\frac{2(2p-1)}{p-1} \frac{a^2-b^2}{a^2+b^2} (z \bar z)^{p}.
$$
Note that when $a=b$ we have the basic invariants $z \bar z$, $z^{2p} + \bar z^{2p}$, known to be Saito flat coordinates in this case \cite{SYS}.
\end{proof}

\section{Concluding remarks}

We have shown that the theory of $\vee$-systems has natural links both with the representation theory of holonomy Lie algebras and with Saito's theory of logarithmic vector fields, which could be used in both ways. This led us to the notion of the harmonic $\vee$-system, which seems to be important in its own right.

The homological representations of braid groups were first studied by Lawrence  \cite{Lawrence} in relation with Hecke algebras and later by Bigelow and Krammer, who showed  that they provide faithful representations of braid groups \cite{Bigelow, Krammer}. The relationship to the monodromy representations of the KZ connections was investigated by Kohno \cite{Kohno3}. Our results can be used to extend these important developments to a larger class of the hyperplane arrangements. 

 We would like to mention a few more very interesting and relevant questions, which were left aside.  The first one is related to the so called ``almost duality" discovered by Dubrovin \cite{D2}. More precisely, Dubrovin found a remarkable connection between the polynomial Frobenius structure on the orbit space of a Coxeter group and the logarithmic Frobenius structure with the prepotential
$$\mathcal F= \sum_{\alpha\in \mathcal R_+} (\alpha,x)^2 \log (\alpha,x)^2,$$
where $\mathcal R$ is the corresponding Coxeter root system with all the roots of the same length \cite{D2}.
A natural question is what is the dual structure in the case when $\mathcal R$ is a general Coxeter $\vee$-system with roots of different lengths, or more generally, if $\mathcal R$ is any harmonic $\vee$-system.

The second question is about possible differential-geometric interpretation of the corresponding potentials.
As we have seen above these potentials are certain deformations of Saito flat coordinates. Finally, it would be interesting to investigate the role of these potentials for the representations  of rational Cherednik algebras similarly to  Saito coordinates which are shown to be related to special singular vectors in the polynomial representations \cite{FS}. 
We hope to address these questions elsewhere.

\section{Acknowledgements}

One of us (APV) is grateful to the Graduate School of Mathematical Sciences of Tokyo University for the hospitality during the summer semester 2014.
He is very grateful to A. Kato, T. Milanov, T. Takebe and especially to T. Kohno and K. Saito for useful and encouraging discussions. MF is grateful to J. Shiraishi for the useful discussion on the relation between Jack and  Saito  polynomials. We thank also M. Pavlov for pointing out the reference \cite{KK}.

This work was partly supported by the EPSRC (grant EP/J00488X/1) and by the Royal Society/RFBR joint project JP101196/11-01-92612.

\end{document}